\newcommand{\LV}{\left|}
\newcommand{\RV}{\right|}
\newcommand{\LC}{\left(}
\newcommand{\RC}{\right)}
\newcommand{\LB}{\left[}
\newcommand{\RB}{\right]}
\newcommand{\LCB}{\left\{}
\newcommand{\RCB}{\right\}}
\newcommand{\D}[2]{\frac{d#1}{d#2}}
\newcommand{\paren}[1]{\left(#1\right)}
\newcommand{\mb}[1]{\mathbf{#1}}
\newcommand{\bt}{{\mathbf t}}
\newcommand{\bv}{{\mathbf v}}
\newcommand{\bq}{\mathbf q }
\newcommand{\ff}{\bf f}
\newcommand{\tr }{{\textrm {tr\,}}}
\newcommand{\bn}{{\mathbf n}}
\newcommand{\Cauchy}{\mathcal T}
\newcommand{\wfh}{\mathcal{W}_{\begin{tiny}{\hbox{FH}}\end{tiny}}}
\newcommand{\Id}{{\bf I}}
\newcommand{\bx}{{\mathbf x}}
\newcommand{\bX}{{\mathbf X}}
\newtheorem{remark}[theorem]{Remark}
\begin{document}

\title{Long-time existence of classical solutions to a 1-D swelling gel}

 \author{M.~Carme Calderer \thanks{School of Mathematics, University of Minnesota, 
206 Church Street S.E., Minneapolis, MN 55455, USA.({\tt calde014@.umn.edu}).}
\and
Robin Ming Chen\thanks{Department of  Mathematics, University of Pittsburgh, 301 Thackeray Hall, Pittsburgh, PA  15260, USA. ({\tt mingchen@pitt.edu})}}



\thanks{M.C. Carme was supported in part by NSF grants DMS-0909165. R.M. Chen was supported in part by NSF grants DMS-0908663.}

\maketitle
\begin{abstract}
In this paper we derived a model which describes the swelling dynamics of a gel and study the system in one-dimensional geometry with a free boundary. The governing equations are hyperbolic with a weakly dissipative source. Using a mass-Lagrangian formulation, the free-boundary is transformed into a fixed-boundary. We prove the existence of long time $C^1$-solutions to the transformed fixed boundary problem.
\end{abstract}


\section{Introduction}\label{sec_intro}
In this paper, we study existence of long time $C^1$-solutions of  a  free boundary problem
  modeling    swelling of gels,  in one space dimension.  The gel is  assumed to be a mathematical mixture of 
  polymer and solvent. We assume that the gel is surrounded by solvent and that its  boundary is fully permeable to solvent. 
The governing equations  consist of the laws of balance of mass and linear momentum of the components and form a weakly dissipative 
hyperbolic system.   The scaling leading to hyperbolic dynamics is motivated by the modeling of polysaccharide gels occurring in nature, for instance as 
motor devices of gliding bacteria ({\it myxobacteria}).  
In general, hypebolic dynamics is a signature property of active gels encountered in models of the cytoskeleton of living cells and also in studies of biomimetic non-equilibrium  gels \cite{Levine}, \cite{Sharma}.

We assume that a gel is a saturated, incompressible and immiscible mixture of polymer and solvent. Incompressibility in the context of mixtures refers to every component having 
constant mass density in the pure state. (In particular, it does not preclude the polymer from experiencing deformations with very large change in volume, 
as much as 500 percent, if ions are present). 
 Immiscibility corresponds to the constitutive equations depending on the volume fraction, $\phi_1$($\equiv \phi$) and $\phi_2$, respectively, of the polymer and solvent 
component. The saturation assumption (also known in some literature as {\it incompressibility}) is the statement of the volume fractions adding to 1. 
The balance laws consist of the equations of balance of mass and linear momentum of each component together with the saturation constraint. 
The coupling between component equations takes place through the Flory-Huggins energy of mixing, the drag forces and the boundary conditions. 

For gels made of entangled polymers such as those used in biomedical devices, scaling arguments show that elastic (\cite{YD1, YD2, suo}) and  dissipative effects often dominate inertia,  that enters the early dynamics only. Moreover, in \cite{CCLZ} the authors assume that the 
dissipation of both the polymer and solvent is Newtonian. 

In this paper, we consider gels made of polysacharide networks, with dissipation parameters several orders of magnitude smaller (as much as $10^{-7}$) 
than those of the entangled polymer counterparts  \cite{Zhang}. This motives us to keeping the inertia terms to study the intermediate time dynamics of 
the polysaccharide  systems (as well as the early dynamics of the highly dissipative gels). 

The energy of the system is the sum of the nonlinearly elastic stored energy function of the polymer and the Flory-Huggins energy of mixing. 
We establish a dissipation law satisfied by solutions of the governing system in arbitrary space dimension. 
There are two types of boundary conditions complementing the elasticity and the Euler equation. These are of traction-displacement kind, for the 
 gel equation (this being the sum of the elasticity and the Euler equation for the fluid), and either scalar Dirichlet or traction boundary conditions for the 
the Euler equation of the fluid. These boundary conditions, also considered in earlier works on gels \cite{Zhang}, \cite{Brandon}, are motivated 
by the works of Doi and Yamaue \cite{YD1, YD2}. They express properties of permeability of the gel interface to the surrounding fluid, and may range from 
impermeable (Dirichlet boundary) to fully permeable (traction boundary). 
However, these conditions have been found not be exact, from the point of view that they do not necessarily guarantee the balance of mass, linear momentum and 
energy at the interface between the gel and the external fluid. In fact, they differ from the balance laws at the interface by a term quadratic 
on  the normal component of the difference of the polymer and solvent velocities, and proportional to the fluid inertia.  Moreover, the assumption of viscous  interface stress  yields
 a fully dissipative energy law  \cite{calderer-chen-micek-mori2010}. 

In this work, we consider a polymer separated from the surrounding fluid and  confined to a strip domain. We assume that at  time $t=0$ the polymer enters 
into contact with the fluid at the boundary points, causing it to enter and swell the polymer. 
The requirement for the three dimensional governing system to have one-dimensional solutions determines the (pressure) Lagrange multiplier. 
Moreover, it follows from the equation of balance of mass, that the center of mass velocity is independent of space. This motivates us to rewrite 
the governing equations in terms of the relative fluid-polymer velocity,  $\mathbf U=\bv_1-\bv_2$. Another implication of the one dimensional geometry
 is that the volume fraction on the boundary is fully determined from the traction boundary condition, provided the fluid inertia on the boundary conditions 
is neglected. In this case,   a problem  originally formulated
with traction boundary becomes a of Dirichlet  type. 

Without loss of generality and to simplify the analysis, we assume that the initial domain (also used as reference configuration) to be the symmetric 
interval $(-1, 1)$. We use the mass-Lagrangian change of variables from gas dynamics, to transform the free boundary problem into one with fixed domain. We find that
$G'(\phi)+ U^2<0$ is a necessary and sufficient condition for the governing equation to be hyperbolic,  and so, it is necessary that $G'(\phi)<0$ hold. 
The function $G(\phi)$ corresponds to a one-dimensional stress, and the condition of negative  derivative  with respect to $\phi$ expresses monotonicity of
the stress with respect to the  deformation gradient $u_x=\frac{\phi_I}{\phi}$,  where $\phi_I$ denotes the reference volume fraction of the polymer. 
Heuristically, if $G'$ is bounded,  the hyperbolicity condition is satisfied for sufficiently small relative velocity. 
Using the framework introduced in \cite{BD} in dealing with the hyperbolic initial-boundary-value problem, 
we obtain the local well-posedness of strong solutions for data satisfying the hyperbolicity and uniform Kreiss-Lopatinski\u\i \ condition. We also consider the long-time existence of classical solutions. We use the method of characteristics to estimate the $C^1$-norm of solutions. Due to the lack of strong dissipation of the system and the boundary damping, our estimate does not lead to the existence of global solutions. Instead, we prove that the existence time is at least of order $O\LC |\log\varepsilon| \RC$, where $\varepsilon$ is the $C^1$-deviation from the equilibrium.

The rest of the paper is organized as follows. In Section \ref{sec_modeling} we set up the mathematical model that describes the swelling dynamics of a gel and derive the constitutive equations. We also formulate the one-dimensional system with boundary conditions. In Section \ref{sec_fix} we transform the one-dimensional free-boundary problem into a fixed-boundary problem using the mass-Lagrangian change of variables, and prove the local well-posedness of strong solutions to the transformed problem. In Section \ref{sec_global} we prove the long-time well-posedness of classical solutions.

\section{General model of gel-swelling}\label{sec_modeling}
We begin by setting up the model of the hydrogel postulated in \cite{CZ, CCLZ}. We assume that a gel is a saturated, incompressible and immiscible mixture of  elastic solid and  fluid.
 In the reference configuration, the polymer occupies a domain $\Omega\in \mathbb{R}^3$. 
 The solid undergoes a deformation according to the smooth  map
\begin{equation}
\bx=\bx(\bX, t), \quad \textrm {such that\,\,} \det(\nabla\bx)>0,\,\, \bX\in\Omega.
\end{equation}
 We let  $\Omega_t=\bx(\Omega)$ denote  the domain occupied by the gel at time $t\geq 0$, $\Gamma\subset\partial\Omega_t$ be the gel-solvent interface (in the case that $\Gamma=\partial\Omega_t$, the gel is fully surrounded by the solvent), and let $F=\nabla\bx$ denote the deformation gradient.  We label  the polymer and fluid components with indices {\it 1} and {\it 2}, respectively.
A point $\bx\in\Omega_t$ is occupied by, both, solid and fluid at volume fractions
 $\phi_1=\phi_1(\bx,t)$ and $\phi_2=\phi_2(\bx,t)$, respectively.  We let $\bv_1=\bv_1(\bx,t)$ and $\bv_2=\bv_2(\bx, t)$ denote the corresponding velocity fields.

An {immiscible} mixture is such that  the  constitutive equations  depend explicitly on the volume fractions $\phi_i$.
  We let $\rho_i$  denote the {mass} densities of the $i$th component (per unit volume of gel). 
 They are related to the {intrinsic} densities $\gamma_i$ by $\rho_i=\gamma_i \phi_i$, $i=1,2$,  which are constant in the case of  an incompressible mixture.

\subsection{Governing equations}\label{subsec_goveqns}
The assumption of saturation of the mixture, that is, that no species other than polymer and fluid are present,
is expressed by the equation
 \begin{equation}\label{sat}
 \phi_1+\phi_2 = 1.
\end{equation}
(In some terminologies, this is  condition is also known  as incompressibility). 
 The governing equations consist of the balance of mass and linear momentum of each components  as well as the chain-rule relating the time derivative to the gradient of deformation with the velocity gradient:	
\begin{eqnarray}
\frac{\partial \phi_i}{\partial t} +\nabla\cdot(\phi_i\bv_i) &&= 0, \label{mass}\\		
\gamma_i\phi_i\left(\frac{\partial \bv_i}{\partial t} + (\bv_i\cdot\
\nabla)\bv_i\right) &&= \nabla\cdot\Cauchy_i+ {\ff}_i, \quad \ff_1+\ff_2={\bf 0}, \label{lin-momentum} \\
\frac{\partial F}{\partial t} + (\bv_1\cdot\nabla)F &&=(\nabla\bv_1)F,\label{chainrule}
\end{eqnarray}
$\bx\in\Omega_t$, $i=1,2$.  Here $\ff_i$ represent drag forces between the components, and $\Cauchy_i$ is the Cauchy stress tensor of the $i$th component. Note that we have used the incompressibility assumption $\gamma_i=\textrm {constant}$ in the previous equations. 
 The Lagrangian form of the equation of balance of mass of the polymer component  is 
\begin{equation}\label{mass-lagrangian}
\phi_1\det F = \phi_I\quad\hbox{in } \Omega,
\end{equation}
where $\phi_I$ is the initial polymer volume fraction in the reference configuration. Note that if $\det F = 1,$ then $\phi_I = \phi_1,$ so that no changes in the volume fraction of the gel correspond to no changes in volume fraction.  Hence equation (\ref{mass-lagrangian}) is equivalent to equation (\ref{mass}) with $i=1.$
Adding equations (\ref{mass}), and using the constraint (\ref{sat}) gives
\begin{equation}\label{div}
 \nabla\cdot(\phi_1\bv_1+\phi_2\bv_2)=0. 
\end{equation}

 With this formulation, the final system of the governing equations are (\ref{lin-momentum}) and (\ref{chainrule}), subject to the constraints in (\ref{sat}), (\ref{mass-lagrangian}), and (\ref{div}. 
 
 \subsection{Boundary conditions}\label{subsec_bdrycond}
 
 We now specify boundary conditions at the gel-water interface $\Gamma$.  We observe that the equations of balance of mass are first order, scalar equations for $\phi_1$ and $\phi_2$ that require prescribing initial conditions only. Later, we will assume that the network part of the gel is elastic, with Newtonian dissipation. The latter will also be assumed for the in-gel and exterior fluids. This requires one vector boundary condition for each equation. 
There are two kinds of boundary conditions that we will discuss, in order to come up with a total of six scalar boundary conditions for the system:
\begin{itemize}
 \item statements of balance of mass,  linear momentum and energy across $\Gamma$, and
\item  relations expressing the permeability properties of the interface. 
\end{itemize}

\noindent
{\sl \underline{Balance of mass across $\Gamma$.}\, }
We denote $\bv_f$ the velocity field of the fluid outside the gel network, which is assumed to be Newtonian. The equation of balance of mass of the fluid across the interface $\Gamma$ is 
\begin{equation}
(\bv_f-\bv_1)\cdot\bn= \phi_2(\bv_2-\bv_1)\cdot\bn:=w, \label{normal-velocity}
\end{equation}
where $\bn$ denotes the unit normal to $\Gamma$, pointing from the gel to the pure liquid.  Note that the last relation defines the normal component, $w$, of the relative velocities.  We also let $\bt_1$  and $\bt_2$ denote a pair of orthonormal vectors perpendicular to $\bn$.  Let  $\bv $  be a vector field on  $\Gamma$.  We use the notation $\|$ to denote the vector components tangent to $\Gamma$ at a point. That is, we write 
$$\bv= (\bv\cdot\bn)\bn + \bv_{\|}=(\bv\cdot\bn)\bn + \sum_{i=1,2}(\bv\cdot\bt_i)\bt_i,$$
\medskip

\noindent
{\sl \underline{Continuity of the  components of the velocity tangent to $\Gamma$.}\,} 
We assume the following no-slip condition
\begin{equation}
 (\bv_f-\bv_1)_{\|}= (\bv_2-\bv_1)_{\|}:= \mb{q}.\label{tangent-velocity}
\end{equation} 
\begin{remark} 
The no-slip assumption equating the tangential components of the fluid velocity, relative to the polymer one, across $\Gamma$ may admit a generalization of the form  

\begin{equation}
 (\bv_f-\bv_1)_{\|}= \eta_v(\bv_2-\bv_1)_{\|}:= \mb{q},\label{tangent-velocity2}
\end{equation}
 where $\eta_v>0$ is a dimensionless parameter related to the surface viscosity. 
We will observe next that this brings tangential stress components in the balance of linear momentum equation. 
\end{remark}

\noindent
{\sl \underline{Balance of linear Momentum across $\Gamma$.}\,}
We next consider force balance across the interface $\Gamma$. Consider a point $\bx\in\Gamma$ at a time $t$, and let us observe this point in an inertial frame traveling with the same velocity as the polymer at point $\bx$ and time $t$. Given 
equations (\ref{normal-velocity}) and (\ref{tangent-velocity}) we observe that the water in the fluid region travels at 
velocity $w\bn + \bq$ and in the gel region travels with velocity $\frac{w}{\phi_2}\bn +\bq$. the change of mass of water across the interface per unit time at point $\bx$ at time $t$ is given by $\gamma_2 |w|$.  As water comes out of or into the gel region, corresponding to the collapsing ($w>0$) or swelling state ($w<0$), respectively, there is the following amount of momentum change from the gel region to the water region per unit time:
 \begin{equation}\label{momentumchange}
  \gamma_2 |w|\LC w\bn - \frac{w}{\phi_2}\bn \RC. 
 \end{equation}
 The force acting at the interface is given by:
\begin{equation}
(\Cauchy_1+  \Cauchy_2 - \Cauchy)\bn.  
\end{equation}
The law of balance of linear momentum gives
\begin{equation}\label{linear-momentum -gamma}
 (\Cauchy_1+  \Cauchy_2 - \Cauchy)\bn=   \gamma_2 |w|w\LC 1 - \frac{1}{\phi_2} \RC \bn.
\end{equation}
Note that this implies, in particular, that
\begin{equation}
 (\Cauchy_1+  \Cauchy_2 - \Cauchy)\bn\cdot \bt_i=0, \,\, i=1,2. \label{stress-tangent-gamma}
\end{equation}
\medskip
Note that the difference in the stress across the boundary is
normal to the interface. In particular, we have:
\begin{equation}
\LC{\Cauchy_1\mb{n}+\Cauchy_2\mb{n} - \Cauchy\mb{n}}\RC\cdot\mb{q}=0\label{Tq}
\end{equation}
since $\mb{q}$ is tangential to the surface $\Gamma$.

At this point, we have the boundary conditions (\ref{normal-velocity}),
(\ref{tangent-velocity}) and (\ref{linear-momentum -gamma}) on $\Gamma$, which together give us six boundary
conditions. Mass balance and total force balance would provide
the necessary number of boundary conditions if the interior
of $\Omega_t$ were composed of a one-phase medium. Here, the
interior of $\Omega_t$ is a two-phase gel. We thus require
three additional boundary conditions, assuming that all of
the phases have bulk viscous stresses. This corresponds to
specifying some condition that involves $w$ and $\mb{q}$.
The appropriate forms for these boundary conditions will be
discussed shortly.

We now check that the boundary conditions (\ref{normal-velocity}), (\ref{tangent-velocity})
and (\ref{linear-momentum -gamma}) lead to mass and momentum conservation.
Let us first check that the amount of water is conserved:
\begin{eqnarray*}
&&\D{}{t}\paren{\int_{\Omega_t} \phi_2d\mb{x}+\int_{\Omega_t^c}d\mb{x}}\\
=&&\int_\Gamma \phi_2(\mb{v}_2-\mb{v}_1)\cdot \mb{n}d\bf S
-\int_\Gamma (\mb{v}_f-\mb{v}_1)\cdot \mb{n}d\bf S\\
=&&0, 
\end{eqnarray*}
where we used (\ref{normal-velocity}) 
in the second equality.

Now, we turn to momentum conservation. We have:
\begin{eqnarray*}
&&\D{}{t}\paren{\int_{\Omega_t}
\paren{\gamma_1\phi_1\mb{v_1}+\gamma_2\phi_2\mb{v}_2}
d\mb{x}+\int_{\Omega_t^c}\gamma_2\mb{v}_fd\mb{x}}\\
=&&\int_\Gamma
\paren{\gamma_2\phi_2\LV(\mb{v}_1-\mb{v}_2)\cdot \mb{n}\RV\mb{v}_2
+\Cauchy_1\mb{n}+\Cauchy_2\mb{n}}d\bf S\\
-&&\int_\Gamma \paren{\gamma_2\LV(\mb{v}_1-\mb{v}_f)
\cdot \mb{n}\RV\mb{v}_f+\Cauchy\mb{n}}d\bf S\\
=&&\int_\Gamma \paren{\gamma_2(\mb{v}_2-\mb{v}_f)|w|
+(\Cauchy_1\mb{n}+\Cauchy_2\mb{n}-\Cauchy\mb{n})}d\bf S\\
=&&\int_\Gamma \paren{\gamma_2\paren{\frac{1}{\phi_2} - 1}w|w|\mb{n}
+(\Cauchy_1\mb{n}+\Cauchy_2\mb{n}-\Cauchy\mb{n})}d\bf S\\
=&&\ 0
\end{eqnarray*}
where we used (\ref{normal-velocity}) in the second and third equalities, (\ref{tangent-velocity})
in the third equality and
(\ref{linear-momentum -gamma}) in the last equality. 
\medskip

\noindent
{\sl \underline{Balance of energy across $\Gamma$.}}
Next, we consider energy conservation. The final form of our
energy relation will lead us to possible forms for the additional
boundary conditions we shall impose on our system.
\begin{eqnarray}\label{encalc}
&&\D{}{t}\paren{\int_{\Omega_t}
\paren{\frac{1}{2}\gamma_1\phi_1\|{\mb{v}_1}\|^2
+\frac{1}{2}\gamma_2\phi_2\|{\mb{v}_2}\|^2}
d\mb{x}+\int_{\Omega^c_t}\frac{1}{2}\gamma_2\|{\mb{v}_f}\|^2d\mb{x}}\\
=&&\int_{\Omega_t}
\paren{\mb{v}_1 (\nabla \cdot \Cauchy_1)+\mb{v}_2(\nabla \cdot \Cauchy_2)}d\mb{x}
+\int_{\Omega^c_t}\mb{v}_f(\nabla \cdot \Cauchy)d\mb{x} \nonumber\\
&&+\int_{\Gamma}\paren{
\frac{1}{2}\gamma_2\phi_2\|{\mb{v}_2}\|^2(\mb{v}_1-\mb{v}_2)\cdot \mb{n}
-\frac{1}{2}\gamma_2\|{\mb{v}_f}\|^2(\mb{v}_1-\mb{v}_f)\cdot \mb{n}}d\bf S \nonumber \\
=&&-\int_{\Omega_t}
\paren{(\nabla \mb{v}_1) \Cauchy_1+(\nabla\mb{v}_2)\Cauchy_2}d\mb{x}
-\int_{\Omega^c_t}(\nabla\mb{v}_f) \Cauchy d\mb{x} \nonumber \\
&&+\int_\Gamma \paren{(\Cauchy_1 \mb{n})\cdot \mb{v}_1+(\Cauchy_2\mb{n})\cdot \mb{v}_2-
(\Cauchy\mb{n})\cdot \mb{v}_f}dS \nonumber \\
&&+\int_\Gamma\paren{
\frac{1}{2}\gamma_2\phi_2\|{\mb{v}_2}\|^2(\mb{v}_1-\mb{v}_2)\cdot \mb{n}
-\frac{1}{2}\gamma_2\|{\mb{v}_f}\|^2(\mb{v}_1-\mb{v_f})\cdot \mb{n}}d\bf S. \nonumber
\end{eqnarray}

Let us evaluate the last two boundary integrals.
Using (\ref{normal-velocity}) and (\ref{tangent-velocity}) we have:
\begin{eqnarray}\label{T12f}
&&(\Cauchy_1 \mb{n})\cdot \mb{v}_1+(\Cauchy_2\mb{n})\cdot \mb{v}_2-
(\Cauchy\mb{n})\cdot \mb{v}_f \\
=&&(\Cauchy_1\mb{n}+\Cauchy_2\mb{n}-\Cauchy\mb{n})\cdot \mb{v}_1
+
\paren{\mb{n}\cdot \paren{\frac{\Cauchy_2}{\phi_2}\mb{n}}-\mb{n}\cdot (\Cauchy\mb{n})}w
+(\Cauchy_2\mb{n}-\Cauchy\mb{n})\cdot \mb{q} \nonumber\\
=&&-\gamma_2w^2\paren{1-\frac{1}{\phi_2}}(\mb{v}_1\cdot\mb{n})
+
\paren{\mb{n}\cdot \paren{\frac{\Cauchy_2}{\phi_2}\mb{n}}-\mb{n}\cdot (\Cauchy\mb{n})}w
-(\Cauchy_1\mb{n})\cdot \mb{q} \nonumber
\end{eqnarray}
where we used (\ref{normal-velocity}) and (\ref{tangent-velocity}) in the first equality and (\ref{Tq})
in the second equality.
On the other hand,
\begin{eqnarray}\label{KE}
&&\frac{1}{2}\gamma_2\phi_2\|{\mb{v}_2}\|^2(\mb{v}_1-\mb{v}_2)\cdot \mb{n}
-\frac{1}{2}\gamma_2\|{\mb{v}_f}\|^2(\mb{v}_1-\mb{v_f})\cdot \mb{n}\\
=&&-\frac{1}{2}\gamma_2\|{\mb{v}_1+\frac{w}{\phi_2}\mb{n}+\mb{q}}\|^2w
+\frac{1}{2}\gamma_2\|{\mb{v}_1+w\mb{n}+\mb{q}}\|^2w \nonumber\\
=&&\gamma_2w^2\paren{1-\frac{1}{\phi_2}}(\mb{v}_1\cdot \mb{n})
-\paren{\frac{1}{2}\gamma_2\paren{\frac{w}{\phi_2}}^2-\frac{1}{2}\gamma_2w^2}w \nonumber
\end{eqnarray}
where we used (\ref{normal-velocity}) in the first equality.
We may go back to (\ref{encalc}) to conclude that:
\begin{eqnarray}\label{encalc2}
&&\D{}{t}\paren{\int_{\Omega_t}
\paren{\frac{1}{2}\gamma_1\phi_1\|{\mb{v}_1}\|^2
+\frac{1}{2}\gamma_2\phi_2\|{\mb{v}_2}\|^2}
d\mb{x}+\int_{\Omega^c_t}\frac{1}{2}\gamma_2\|{\mb{v}_f}\|^2d\mb{x}}\\
=&&-\int_{\Omega_t}
\paren{(\nabla \mb{v}_1) \Cauchy_1+(\nabla\mb{v}_2)\Cauchy_2}d\mb{x}
-\int_{\Omega^c_t}(\nabla\mb{v}_f) \Cauchy d\mb{x} \nonumber \\
&&-
\int_\Gamma
\paren{\paren{\mb{n}\cdot (\Cauchy\mb{n})-\frac{1}{2}\gamma_2w^2}-
\paren{\mb{n}\cdot\paren{\frac{\Cauchy_2}{\phi_2}}\mb{n}-\frac{1}{2}\gamma_2\paren{\frac{w}{\phi_2}}^2}
}w\ d\bf S \nonumber\\
&&-\int_\Gamma (\Cauchy_1\mb{n})\cdot \mb{q}\ d\bf S. \nonumber
\end{eqnarray}
The last two boundary integrals denote the change in energy coming from
the surface $\Gamma$. We would like these terms to be negative.
One way to achieve this would be to let:
\begin{eqnarray}
\eta_\perp w&&=\paren{\mb{n}\cdot (\Cauchy\mb{n})-\frac{1}{2}\gamma_2w^2}-
\paren{\mb{n}\cdot\paren{\frac{\Cauchy_2}{\phi_2}}\mb{n}-\frac{1}{2}\gamma_2\paren{\frac{w}{\phi_2}}^2},
\label{etaw}\\
\eta_\parallel \mb{q}&&=(\Cauchy_1\mb{n})_\parallel\label{etaq},
\end{eqnarray}
where $\eta_\perp$ and $\eta_\parallel$ are positive constants
and $(\Cauchy_1\mb{n})_\parallel$ denotes the component of $\Cauchy_1\mb{n}$ that is
tangential to the membrane.
The above conditions provide the additional three boundary conditions we need
on $\Gamma$.
The boundary condition (\ref{etaw}) depends quadratically on $w$ and
is physically reasonable only if $w$ is sufficiently small. This difficulty
will
not arise if we neglect inertial terms and set $\gamma_i=0, i=1,2$.
For most practical situations, inertial effects can be safely neglected.
If we substitute (\ref{etaw}) and (\ref{etaq}) into (\ref{encalc2}), we have:
\begin{eqnarray*}
&&\D{}{t}\paren{\int_{\Omega_t}
\paren{\frac{1}{2}\gamma_1\phi_1\|{\mb{v}_1}\|^2
+\frac{1}{2}\gamma_2\phi_2\|{\mb{v}_2}\|^2}
d\mb{x}+\int_{\Omega^c_t}\frac{1}{2}\gamma_2\|{\mb{v}_f}\|^2d\mb{x}}\\
=&&-\int_{\Omega_t}
\paren{(\nabla \mb{v}_1) \Cauchy_1+(\nabla\mb{v}_2)\Cauchy_2}d\mb{x}
-\int_{\Omega^c_t}(\nabla\mb{v}_f) \Cauchy d\mb{x}
-\int_\Gamma \paren{\eta_\perp w^2+\eta_\parallel \|{\mb{q}}\|^2}d\bf S.
\end{eqnarray*}

 \subsection{Energy dissipation and constitutive equations}\label{subsec_energydiss}
The free energy density $\Psi$ of the gel consists of the elastic energy  $\mathcal{W}_P(F)$ of the polymer and the Flory--Huggins energy of mixing $\wfh(\phi_1, \phi_2)$ \cite{Fl, Gas}:
\begin{eqnarray}
&&\Psi = \phi_1 \mathcal{W}_P(F)+ \wfh(\phi_1, \phi_2), \quad \textrm {with} \label{freeenergy}\\
&&\wfh(\phi_1, \phi_2)=a\, \phi_1\log\phi_1 + b\,\phi_2\log \phi_2 +c\, \phi_1\phi_2, \label{fhenergy}\\
&&a=\frac{K_BT }{V_mN_1},b=\frac{K_BT}{V_mN_2}, \,\, c=\frac{K_B T}{2V_m}\chi, \nonumber
\end{eqnarray}
where  various parameters above are of the following physical interpretations:
\begin{enumerate}
\item $K_B$ is the Boltzmann constant, and $T$ is the absolute temperature;
\item $V_m$ is the volume occupied by one monomer;
\item $N_1, N_2$ are the numbers of lattice sites occupied by the polymer and the solvent, respectively;
\item $\chi = \chi(\phi_1, \phi_2)$ is the Flory interaction parameter;
\end{enumerate}
In this way, the total energy of the gel is given by
\begin{eqnarray}\label{energy}
\mathcal{E}&&=\int_{\Omega_t} \LC {\gamma_1\over2}\phi_1\|\bv_1\|^2+{\gamma_2\over2}\phi_2\|\bv_2\|^2+\Psi\RC d\bx + \int_{\Omega_t^c} {\gamma_2\over2}\|\bv_f\|^2d\bx.\\
&&:= \mathcal{E}_{P} + \mathcal{E}_{S}. \nonumber
\end{eqnarray}

We now show that smooth solutions to the governing equations satisfy a dissipation inequality, and the dissipation inequality suggests the exact form of the Cauchy stress tensors for the $i$th component, $\Cauchy_i.$  We define $\Cauchy_i$ as the sum of reversible stress $\Cauchy_i^{(r)}$ and viscous stress $\Cauchy_i^{(v)}$ for $i=1,2$: 
\begin{equation}\label{stressdecomp}
	\Cauchy_i = \Cauchy_i^{(r)} + \Cauchy_i^{(v)}.
\end{equation}	
Specifically, the dissipation inequality allows us to obtain the exact form of the reversible stress 
$\Cauchy_i^{(r)}$, the viscous stress $\Cauchy_i^{(v)}$, and the expressions for the friction forces 
$\ff_i$. The derivation of the dissipation inequality uses a similar approach as in~\cite{CCLZ}, and 
we here follow the presentation from~\cite{calderer-chen-micek-mori2010}. 


\begin{theorem}{{\rm (Dissipation Relation)}}\label{thm:dissipation-mechnic}
Suppose that $\{\bv_i, \phi_i\}$ are smooth solutions of  equations (\ref{sat})--(\ref{div}) with boundary conditions (\ref{normal-velocity}), (\ref{tangent-velocity}), (\ref{linear-momentum -gamma}), (\ref{etaw}) and (\ref{etaq}) on $\Gamma$. Let $p$ denote the Lagrange multiplier corresponding to the constraint (\ref{div}).
Assume that the following constitutive equations for the stress tensors components hold: 
\begin{eqnarray}
&&\Cauchy_1^{(r)}= \phi_1\frac{\partial \mathcal{W}_P}{\partial F}F^T-\LB\phi_1\LC\frac{\partial\wfh}{\partial \phi_1}-\frac{\partial\wfh}{\partial\phi_2}\RC-\wfh+p\phi_1\RB\Id,\label{eq:cauchy-relaxation}\\
&&\Cauchy_2^{(r)}= -\phi_2p\Id,\\ 
&&\Cauchy_i^{(v)}=\eta_iD(\bv_i)+\mu_i(\nabla\cdot\bv_i)\Id \label{eq:cauchy-viscous}
\end{eqnarray}
with  $ i=1,2,$ where $\eta_i>0$ and $\mu_i>0$  denote the shear and bulk viscosities of the $i$th component, respectively, and $D(\bv):=\frac{1}{2}(\nabla\bv+\nabla\bv^T)$ is the symmetric part of the velocity gradient.  Suppose that the solvent-polymer friction forces  are given by 
\begin{equation}  
\ff_1=p\nabla\phi_1-\kappa(\bv_1-\bv_2), \quad \ff_2=-\ff_1,  \label{drag-force} 
\end{equation}
with $\kappa>0$ denoting the gel permeability coefficient. 
Then the following dissipation relation holds:
\begin{eqnarray}
\frac{d\mathcal{E}}{dt}=&&-\int_{\Omega_t}\LB\sum_i\LC\eta_i\|D(\bv_i)\|^2 + \mu_i(\nabla\cdot\bv_i)^2\RC + \kappa\|\bv_1-\bv_2\|^2\RB d\bx \nonumber\\
&&-\int_\Gamma \paren{\eta_\perp w^2+\eta_\parallel \|{\mb{q}}\|^2}d\bf S\label{energydissipation}
\end{eqnarray}
where $\bn$ represents the unit outer normal to $\Omega_t$.  
\end{theorem}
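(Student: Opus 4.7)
The plan is to differentiate $\mathcal{E}=\mathcal{E}_P+\mathcal{E}_S$ in time, split into a kinetic-energy part and a free-energy part, and verify by pointwise calculation that after substituting (\ref{eq:cauchy-relaxation})--(\ref{drag-force}) every non-dissipative contribution cancels. The kinetic part follows the computation already displayed in (\ref{encalc})--(\ref{encalc2}) of the excerpt; the one piece missing there is the drag contribution $\int_{\Omega_t}\bv_i\cdot\ff_i\,d\bx$ produced by (\ref{lin-momentum}), which I would reinstate from the outset. All the $\Gamma$-integrals of momentum flux, kinetic-energy flux and work-done-by-stress are then decomposed into normal and tangential components using (\ref{normal-velocity})--(\ref{Tq}), and the closures (\ref{etaw})--(\ref{etaq}) deliver the surface dissipation $-\int_\Gamma(\eta_\perp w^2+\eta_\parallel\|\bq\|^2)\,dS$, exactly as in the unlabeled identity displayed just after (\ref{encalc2}).

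For the free-energy part, I would apply Reynolds transport in $\Omega_t$ with polymer velocity, writing $\frac{d}{dt}\int_{\Omega_t}\Psi\,d\bx=\int_{\Omega_t}(\partial_t\Psi+\nabla\cdot(\Psi\bv_1))\,d\bx$, and treat the two summands of $\Psi$ separately. For $\phi_1\mathcal{W}_P(F)$, the mass balance (\ref{mass}) with $i=1$ eliminates the $\mathcal{W}_P[\partial_t\phi_1+\nabla\cdot(\phi_1\bv_1)]$ piece, and the chain rule (\ref{chainrule}) converts the remainder to $\int\phi_1(\frac{\partial \mathcal{W}_P}{\partial F}F^T):\nabla\bv_1\,d\bx$. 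For $\wfh(\phi_1,\phi_2)$, substituting $\partial_t\phi_i=-\nabla\cdot(\phi_i\bv_i)$ from (\ref{mass}) for both $i$ and regrouping with $\bv_1\cdot\nabla\wfh+\wfh\,\nabla\cdot\bv_1$ produces a scalar contribution $[\wfh-\phi_1\partial_{\phi_1}\wfh-\phi_2\partial_{\phi_2}\wfh]\nabla\cdot\bv_1$ together with a residual $-\partial_{\phi_2}\wfh\,\nabla\cdot(\phi_2(\bv_2-\bv_1))$.

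Summing everything and substituting the constitutive forms, three cancellations have to be verified. The viscous stress $\Cauchy_i^{(v)}$ gives $-\int[\eta_i\|D(\bv_i)\|^2+\mu_i(\nabla\cdot\bv_i)^2]\,d\bx$ by symmetry of $D(\bv_i)$; the elastic block $\phi_1\frac{\partial \mathcal{W}_P}{\partial F}F^T$ of $\Cauchy_1^{(r)}$ cancels the matching term from $\frac{d}{dt}\int\phi_1\mathcal{W}_P$; and the scalar bracket in $\Cauchy_1^{(r)}$ together with the $\wfh$-contribution collapses after using the saturation constraint (\ref{sat}) to write $\phi_1\partial_{\phi_2}\wfh+\phi_2\partial_{\phi_2}\wfh=\partial_{\phi_2}\wfh$ and the incompressibility (\ref{div}) to annihilate $\nabla\cdot(\phi_1\bv_1+\phi_2\bv_2)$. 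The Lagrange multiplier $p$ appears both in $-p\phi_i\Id$ inside each $\Cauchy_i^{(r)}$ and in $p\nabla\phi_1$ inside $\ff_1$; these two contributions cancel against each other after one further use of (\ref{sat}) and (\ref{mass}), leaving only the Kelvin-type dissipation $-\int\kappa\|\bv_1-\bv_2\|^2\,d\bx$ coming from the remaining piece of $\ff_i$.

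The main obstacle is precisely this last bookkeeping step: simultaneously tracking the Lagrange multiplier $p$ through the two reversible stresses and the drag force, and verifying that the Flory--Huggins residual combines with the $p$-terms through (\ref{sat}) and (\ref{div}) into exactly zero. Every other step is routine transport-theorem manipulation and integration by parts on $\Omega_t$ and $\Omega_t^c$, made possible by the boundary identities already assembled in the excerpt.
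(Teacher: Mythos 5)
Your proposal is correct and follows essentially the same route as the paper's proof: Reynolds transport on $\Omega_t$ with the boundary moving at $\bv_1$, elimination of $\partial_t\phi_i$ via (\ref{mass}), the chain rule (\ref{chain-rule}) to convert the elastic term to $\phi_1\frac{\partial\mathcal{W}_P}{\partial F}:(\nabla\bv_1)F$, integration by parts on $\nabla\cdot\Cauchy_i$, and the boundary identities (\ref{normal-velocity})--(\ref{etaq}) to produce the surface dissipation. Your regrouping of the Flory--Huggins terms (isolating a residual $-\partial_{\phi_2}\wfh\,\nabla\cdot(\phi_2(\bv_2-\bv_1))$ before invoking (\ref{sat}) and (\ref{div})) is algebraically equivalent to the paper's, and your accounting of the Lagrange multiplier through $\Cauchy_i^{(r)}$ and $\ff_1$ matches the paper's cancellation via $p\,\nabla\cdot(\phi_1\bv_1+\phi_2\bv_2)=0$.
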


\begin{proof}
 Without loss of generality, assume that $\gamma_1=\gamma_2=1$. 
We observe that the boundary of the gel is determined by that of the polymer network and moves with velocity $\bv_1$. We first calculate the time derivative of 
$\mathcal{E}$. 
Using the Reynolds Transport Theorem,  and taking into account that the boundary $\partial \Omega_t$ moves with the network speed $\bv_1$, we have
\begin{eqnarray*}
\frac{d\mathcal{E}_P}{dt}=&& \int_{\Omega_t}\bigg[\frac{\partial}{\partial t}\left(\frac{\phi_1}{2} \|\bv_1\|^2+ \frac{\phi_2}{2}\|\bv_2\|^2\right)+ 
\nabla\cdot\left(\bv_1 \frac{\phi_1}{2}\|\bv_1\|^2\right)   +\nabla\cdot\left(\bv_1 \frac{\phi_2}{2}\|\bv_2\|^2\right)\bigg]\,d\bx\\
&&+\int_{\Omega_t}\bigg[\frac{\partial \Psi}{\partial t}+ \nabla \cdot (\bv_1\phi_1 \mathcal{W}_P) + \nabla\cdot(\bv_1\wfh)\bigg]\,d\bx.
\end{eqnarray*}
Next, expand $\frac{\partial\Psi}{\partial t}$ in terms of $\phi_1, \phi_2, F$ using the chain rule to obtain
\begin{eqnarray*}
\frac{d\mathcal{E}_P}{dt}= &&\int_{\Omega_t}\bigg[\frac{\partial}{\partial t}\left(\frac{\phi_1}{2} \|\bv_1\|^2+ \frac{\phi_2}{2}\|\bv_2\|^2\right)+ 
\nabla\cdot\left(\bv_1 \frac{\phi_1}{2}\|\bv_1\|^2\right)   +\nabla\cdot\left(\bv_1 \frac{\phi_2}{2}\|\bv_1\|^2\right)\bigg]\,d\bx\\
&&+\int_{\Omega_t}\bigg[\left(\mathcal{W}_P+\frac{\partial\wfh}{\partial\phi_1}\right)\phi_{1,t}+\left(\frac{\partial\wfh}{\partial\phi_2}\right)\phi_{2,t}+  \phi_1\frac{\partial \mathcal{W}_P}{\partial F}:F_t\bigg]\,d\bx\nonumber \\
&&+ \int_{\Omega_t}  \bigg[(\nabla\cdot\bv_1)\phi_1 \mathcal{W}_P+ \bv_1\cdot\nabla(\phi_1 \mathcal{W}_P)+ \nabla\cdot(\bv_1\wfh)\bigg]\,d\bx. 
\end{eqnarray*}
  Equation (\ref{mass}) of  balance of mass,
$\frac{\partial\phi_i}{\partial t} =-\nabla\cdot(\phi_i\bv_i)$,
for $i=1,2$ yields 
\begin{eqnarray*}
\frac{d\mathcal{E}_P}{dt}=&& \int_{\Omega_t}\bigg[\frac{\partial}{\partial t}\left(\frac{\phi_1}{2} \|\bv_1\|^2+ \frac{\phi_2}{2}\|\bv_2\|^2\right)+ 
\nabla\cdot\left(\bv_1 \frac{\phi_1}{2}\|\bv_1\|^2\right)   +\nabla\cdot\left(\bv_1 \frac{\phi_2}{2}\|\bv_2\|^2\right)\bigg]\,d\bx\\
&&+\int_{\Omega_t}\bigg[-\left(\mathcal{W}_P+\frac{\partial\wfh}{\partial\phi_1}\right)\nabla\cdot(\phi_1\bv_1)-\left(\frac{\partial\wfh}{\partial\phi_2}\right)\nabla\cdot(\phi_2\bv_2)+  \phi_1\frac{\partial \mathcal{W}_P}{\partial F}: F_t\bigg]\,d\bx\\
&&+ \int_{\Omega_t}  \bigg[(\nabla\cdot\bv_1)\phi_1 \mathcal{W}_P+ \bv_1\cdot\nabla(\phi_1 \mathcal{W}_P)+ \nabla\cdot(\bv_1\wfh)\bigg]d\bx.  %
\end{eqnarray*}
Gathering the terms involving $\wfh$ and using the incompressibility of mixture constraint in equation (\ref{sat}) gives the following:
\begin{eqnarray*}
&& -\frac{\partial\wfh}{\partial\phi_1}\nabla\cdot(\phi_1\bv_1)-\frac{\partial\wfh}{\partial\phi_2}\nabla\cdot(\bv_2\phi_2) +\bv_1\cdot\nabla \wfh+\wfh\nabla\cdot\bv_1\\
 &&=\nabla\cdot\LC\phi_1\bv_1)\big(-\frac{\partial \wfh}{\partial\phi_1}+\frac{\partial \wfh}{\partial\phi_2}\RC+\bv_1\cdot\LC\frac{\partial \wfh}{\partial\phi_1}\nabla\phi_1+ \frac{\partial \wfh}{\partial\phi_2}\nabla\phi_2\RC + \wfh\nabla\cdot\bv_1\\
 &&= \LB\phi_1\LC\frac{\partial \wfh}{\partial\phi_2}-\frac{\partial \wfh}{\partial\phi_1}\RC+\wfh\RB(\nabla\cdot\bv_1)
 \end{eqnarray*}
 Substituting it  into the expression for $\frac{d\mathcal{E}_P}{dt}$ gives
\begin{eqnarray*}
\frac{d\mathcal{E}_P}{dt}=&& \int_{\Omega_t}\bigg[\frac{\partial}{\partial t}\left(\frac{\phi_1}{2} \|\bv_1\|^2+ \frac{\phi_2}{2}\|\bv_2\|^2\right)+ 
\nabla\cdot\left(\bv_1 \frac{\phi_1}{2}\|\bv_1\|^2\right)   +\nabla\cdot\left(\bv_1\frac{\phi_2}{2}\|\bv_2\|^2\right)\bigg]\,d\bx\\
&&+\int_{\Omega_t}\LCB\phi_1\frac{\partial \mathcal{W}_P}{\partial F}: F_t+\LB\phi_1\LC\frac{\partial \wfh}{\partial\phi_2}-\frac{\partial \wfh}{\partial\phi_1}\RC+\wfh\RB(\nabla\cdot\bv_1)
+ \phi_1\bv_1\cdot \nabla \mathcal{W}_P\RCB \,d\bx. 
 \end{eqnarray*}

Next, we apply the chain-rule relation  between material and spatial time derivatives,  
\begin{equation} 
\frac{D F}{dt}= F_t+(\bv_1\cdot \nabla) F= (\nabla\bv_1)F \label{chain-rule},
\end{equation}
to obtain the following identity:
\begin{eqnarray*}
	&&\phi_1\frac{\partial\mathcal{W}_P}{\partial F}: F_t + \phi_1\bv_1\cdot\nabla\mathcal{W}_P \\
	&&=\phi_1\frac{\partial\mathcal{W}_P}{\partial F}: F_t +\phi_1\bv_1\bigg(\frac{\partial\mathcal{W}_P}{\partial F}:\nabla F\bigg)\\
	&&=\phi_1\frac{\partial\mathcal{W}_P}{\partial F}:( F_t + \bv_1\cdot\nabla F)=\phi_1\frac{\partial\mathcal{W}_P}{\partial F}:\nabla\bv_1 F.
\end{eqnarray*}
The expression for $\frac{d\mathcal{E}_P}{dt}$ simplifies to
 \begin{eqnarray*}
\frac{d\mathcal{E}_P}{dt}=&& \int_{\Omega_t}\bigg[\frac{\partial}{\partial t}\left(\frac{\phi_1}{2} \|\bv_1\|^2+ \frac{\phi_2}{2}\|\bv_2\|^2\right)+ 
\nabla\cdot\left(\bv_1 \frac{\phi_1}{2}\|\bv_1\|^2\right)   +\nabla\cdot\left(\bv_1 \frac{\phi_2}{2}\|\bv_2\|^2\right)\bigg]\,d\bx\\
&&+\int_{\Omega_t}  \LCB\phi_1\frac{\partial\mathcal{W}_P}{\partial F}: ((\nabla\bv_1) F)+\
\LB\phi_1\LC\frac{\partial \wfh}{\partial\phi_2}-\frac{\partial \wfh}{\partial\phi_1}\RC+\wfh\RB(\nabla\cdot\bv_1)\RCB\,d\bx\\
 \label{energy-derivative3}
 \end{eqnarray*}

In the final steps, we address the kinetic energy terms.  
Using the equations of balance of mass and linear momentum of for $i=1$, we calculate:
\begin{eqnarray*}
\frac{\partial}{\partial t}\left(\frac{1}{2}\phi_1\|\bv_1\|^2\right)+&&\nabla\cdot\left(\bv_1 \frac{1}{2}\phi_1\|\bv_1\|^2\right)\\
\quad&&=\phi_1\bv_1\cdot{\bv}_{1,t}+\frac{1}{2}\|\bv_1\|^2\bigg(\frac{\partial\phi_1}{\partial t} + \nabla\cdot(\phi_1\bv_1)\bigg)+ \frac{1}{2}\phi_1bv_1\cdot\nabla\big(\|\bv_1\|^2\big)\\ 
&&=\phi_1\bv_1\cdot{\bv}_{1,t} -\frac{1}{2}\|\bv_1\|^2\nabla\cdot(\phi_1\bv_1) + \frac{1}{2}\|\bv_1\|^2\nabla\cdot(\phi_1\bv_1) + \phi_1\bv_1\cdot (\nabla\bv_1)^T\bv_1\\ 
&& =\phi_1\bv_1\cdot \frac{D\bv_1}{Dt}=\bv_1\cdot \LC\nabla\cdot\Cauchy_1+ \ff_1\RC
\end{eqnarray*}
Here, $\frac{D}{Dt}$ denotes the material time derivative.  Likewise,
\begin{eqnarray*}
\frac{\partial}{\partial t}\left(\frac{1}{2}\phi_2\|\bv_2\|^2\right)+&&\nabla\cdot\left(\bv_1 \frac{1}{2}\phi_2\|\bv_2\|^2\right) \\
\quad&&=\phi_2\bv_2\cdot{\bv}_{2,t}+\frac{1}{2}\|\bv_2\|^2\bigg(\frac{\partial\phi_2}{\partial t} + \nabla\cdot(\phi_2\bv_1)\bigg)+ \frac{1}{2}\phi_2\bv_1\cdot\nabla\big(\|\bv_2\|^2\big)\\ 
&&=\phi_2\bv_2\cdot{\bv}_{2,t} -\frac{1}{2}\|\bv_2\|^2\nabla\cdot(\phi_2\bv_2) + \frac{1}{2}\|\bv_2\|^2\nabla\cdot(\phi_2\bv_1) + \phi_2\bv_1\cdot (\nabla\bv_2)^T\bv_2\\ 
&&=\phi_2\bv_2\cdot\bv_{2,t}+\phi_2\bv_2\cdot(\nabla\bv_2)^T\bv_2+\phi_2\bv_1\cdot(\nabla\bv_2)^T\bv_2\\
&&\quad\quad-\phi_2\bv_2\cdot(\nabla\bv_2)^T\bv_2+\frac{1}{2}\|\bv_2\|^2\nabla\cdot(\phi_2(\bv_1-\bv_2))\\
&&= \phi_2\bv_2\cdot \frac{D\bv_2}{Dt}+ \phi_2(\nabla\bv_2)^T\bv_2\cdot(\bv_1-\bv_2)+\frac{1}{2}\|\bv_2\|^2\nabla\cdot(\phi_2(\bv_1-\bv_2)).
\end{eqnarray*}
Integrating the last term by parts gives
\begin{eqnarray*}
\int_{\Omega_t}\bigg[\frac{\partial}{\partial t}&&\left(\frac{1}{2}\phi_2\|\bv_2\|^2\right)+\nabla\cdot\left(\bv_1 \frac{1}{2}\phi_2\|\bv_2\|^2\right)\bigg]\,d\bx\\
\quad &&= \int_{\Omega_t}\bigg[\phi_2\bv_2\cdot \frac{D\bv_2}{Dt}+ \phi_2(\nabla\bv_2)^T\bv_2\cdot(\bv_1-\bv_2)+\frac{1}{2}\|\bv_2\|^2\nabla\cdot(\phi_2(\bv_1-\bv_2))\bigg]\,d\bx\\
&&= \int_{\Omega_t}\phi_2\bv_2\cdot\frac{D\bv_2}{Dt}\,d\bx+\int_{\Gamma_{-}}\frac{1}{2}\phi_2\|\bv_2\|^2(\bv_1-\bv_2)\cdot\bn\,d{\bf S}.
\end{eqnarray*}
Plugging in the Lagrange multiplier we have that, for $i=1,2,$ 
\begin{eqnarray*}
\frac{d\mathcal{E}_P}{dt}=&& \int_{\Omega_t}\sum_i (\bv_i\cdot (\nabla\cdot\Cauchy_i) +{\ff}_i\cdot\bv_i + p\nabla\cdot(\phi_i\bv_i))\,d\bx\\
&&+\int_{\Omega_t} \LCB\phi_1\frac{\partial \mathcal{W}_P}{\partial F}: (\nabla\bv_1)F
+\LB\phi_1\LC\frac{\partial \wfh}{\partial\phi_2}-\frac{\partial \wfh}{\partial\phi_1}\RC+\wfh\RB(\nabla\cdot\bv_1)]\RCB\,d\bx\\
&&+\frac{1}{2}\int_{\partial\Omega_t}\phi_2\|\bv_2\|^2(\bv_1-\bv_2)\cdot\bn\,d{\bf S}.  
  \end{eqnarray*}
Integrating by parts on the term $\nabla\cdot\Cauchy_i$ for $i=1,2$ gives,
\begin{eqnarray}
\frac{d\mathcal{E}_P}{dt}&&= \int_{\Omega_t} \sum_i(-\nabla\bv_i \cdot\Cauchy_i -p\phi_i(\nabla\cdot\bv_i) +{\ff}_i\cdot\bv_i - p\nabla\phi_i\cdot\bv_i)\,d\bx \nonumber\\
&&+\int_{\Omega_t} \LCB\phi_1\frac{\partial \mathcal{W}_P}{\partial F}: (\nabla\bv_1) F+
 \LB\phi_1\LC\frac{\partial \wfh}{\partial\phi_2}-\frac{\partial \wfh}{\partial\phi_1}\RC+\wfh\RB(\nabla\cdot\bv_1)\RCB\,d\bx \nonumber\\
&&+\int_{\partial\Omega_t} \LB\Cauchy_i\bv_i +\frac{1}{2}\phi_2\|\bv_2\|^2(\bv_1-\bv_2)\RB\cdot\bn\,d{\bf S}.\label{gel-energy-derivative}  
\end{eqnarray}

The time derivative $\frac{d\mathcal{E}_S}{dt}$ is treated in a similar way giving
\begin{equation}
 \frac{d{\mathcal E}_S}{dt}= -\int_{\Omega^c_t} \nabla\bv_f\cdot\mathcal T\,d\bx-\int_{\Gamma}\LC\mathcal T\bv_f+ \frac{\|\bv_f\|^2}{2}(\bv_1-\bv_f)\RC\cdot \bn\,dS.\label{solvent-energy-derivative}
\end{equation}

Therefore the conclusion follows by adding up (\ref{gel-energy-derivative}) and (\ref{solvent-energy-derivative}), and using $\Cauchy_i$ as in (\ref{eq:cauchy-relaxation}) and (\ref{eq:cauchy-viscous}), $\ff_i$  as given by (\ref{drag-force}), and all the boundary conditions. 

\end{proof}

We further  assume that the polymer is an isotropic elastic material, that is,  
\begin{equation}
\mathcal{W}_P(F)=\mathcal{{W}}_P(I_1, I_2, I_3),  \quad I_1= \tr{C}, \,\, I_2= \frac{1}{2}\LB \tr^2(C)- (\tr \,C)^2\RB, \,\, I_3=\det\, C, \end{equation}
where $C=F^TF$. Here we take $\mathcal{W}_P(F)$ to be of the following form
\begin{equation}\label{elasticenergy}
\mathcal{W}_P(F)=(I_1^s - c) + \alpha_0(I_3^{-{r\over2}} - 1) + \beta_0I_3^{{1\over2}} + \beta_1 I_3^{{q\over2}}
\end{equation}
where $c, \alpha_0, \beta_0, \beta_1>0$, $r\geq1$ and $s,q>1$. 

As for the Flory-Huggins mixture energy, we adopt the interaction equation from Horkay et al. \cite{Horkay} as follows.
\begin{equation}\label{interaction}
\chi(\phi_1, \phi_2) = \chi_0 + \chi_1 \phi_1 + \chi_2 \phi_1^2.
\end{equation}

Further computation gives the following results for the reversible stress tensors
\begin{eqnarray}
&&\Cauchy_1^{(r)}=2sI_1^{s-1}FF^T + \LB -\alpha_0r(\det F)^{-r} + \beta_0\det F + \beta_1 q(\det F)^q \RB \Id \nonumber\\
&&\quad \quad \ \ - \LCB \phi_1 \LB \LC {K_BT\over 2V_m}\chi_0\phi_2 + {K_BT\over N_1V_m}\log\phi_1 + {K_BT\over N_1V_m} + 2\chi_1\phi_1\phi_2 + 3\chi_2\phi^2_1\phi_2 \RC \right.\right. \nonumber\\
&&\quad \quad \quad \quad \ \ \left. - \LC {K_BT\over 2V_m}\chi_0\phi_1 + {K_BT\over N_2V_m} \log\phi_2 + {K_BT\over N_2V_m} + \chi_1\phi^2_1 + \chi_2\phi_1^3 \RC\RB \label{stress1}\\
&&\quad \quad \quad \ \ - \LC  {K_BT\over 2V_m} \chi_0\phi_1\phi_2 +  {K_BT\over N_1V_m}\phi_1\log\phi_1 +  {K_BT\over N_2V_m}\phi_2\log\phi_2 \RC \nonumber\\
&&\quad \quad \quad \ \  + \chi_1\phi_1^2\phi_2 + \chi_2\phi^2_1\phi_2 + p\phi_1 \Big\} \Id, \nonumber\\
&&\Cauchy_2^{(r)}= -p\phi_2 \Id. \label{stress2}
\end{eqnarray}


\subsection{A new field of unknowns}\label{subsec_1d}

Now we will formulate the effective governing equations in terms of the center of mass velocity $\textbf{V}=\phi_1\textbf{v}_1+\phi_2\textbf{v}_2$ and the diffusion velocity $\textbf{U} = \textbf{v}_1-\textbf{v}_2$. Hence the new field of unknowns is
\begin{eqnarray*}
\{ \textbf{V}, \textbf{U}, \phi_1, p, F \}.
\end{eqnarray*}
The total stress of the system is
\begin{eqnarray*}
\mathcal{T}=\mathcal{T}_1^{(r)}+\mathcal{T}_2^{(r)}-\phi_1\phi_2 \textbf{U}\otimes\textbf{U},
\end{eqnarray*}

In this way, in Eulerian coordinates, the governing equations (\ref{sat})--(\ref{div}) become
\begin{eqnarray}
&&\frac{\partial\phi_1}{\partial t}+\LB (\textbf{V}+(1-\phi_1)\textbf{U})\cdot\nabla \RB \phi_1+\phi_1\nabla\cdot \LB \textbf{V}+(1-\phi_1)\textbf{U} \RB =0, \label{eqn_mass}\\
&&\frac{\partial\textbf{V}}{\partial
t}+(\textbf{V}\cdot\nabla)\textbf{V}=\nabla\cdot\mathcal{T}, \label{eqn_momentum}\\
&&\frac{\partial\textbf{U}}{\partial
t}+(1-2\phi_1)(\nabla\textbf{U})\textbf{U}-(\textbf{U}\otimes\textbf{U})\nabla\phi_1
+(\nabla\textbf{V})\textbf{U}+(\nabla\textbf{U})\textbf{V} \nonumber\\
&&\quad
=\frac{1}{\phi_1}\nabla\cdot\mathcal{T}_1-\frac{1}{1-\phi_1}\nabla\cdot\mathcal{T}_2-
\frac{\beta}{\phi_1(1-\phi_1)}\textbf{U}+\frac{\lambda\nabla\phi_1}{\phi_1(1-\phi_1)}, \label{eqn_micro}\\
&&F_t+\LB \textbf{V}+(1-\phi_1)\textbf{U} \RB \cdot\nabla F=\nabla\LB \textbf{V}+(1-\phi_1)\textbf{U} \RB F, \label{eqn_chain}\\
&&\nabla\cdot\textbf{V}=0. \label{eqn_incomp}
\end{eqnarray}
The first and second equations give the balance of mass and linear momentum for the mixture. The third equation can be interpreted as giving the evolution of the microstructure of the gel. Equation (\ref{eqn_chain}) is a version of the chain rule relating time derivatives of $F$ with velocity gradients. This equation is required in mixed solid-fluid systems \cite{LLZ}. The last equation looks like an incompressibility condition of the mixture and is from the balance of mass and the constraint $\phi_1+\phi_2=1$.

In application to modeling gliding behavior of bacteria by polysaccharide swelling, the problem can be thought of as in one dimensional \cite{Ho}. We consider the gel occupying a strip domain $\{(x,y,z):\ -L\leq x\leq L\}$ for some $L>0$. Now the fields of problem become
\begin{equation}\label{eqn_newfields}
\textbf{V}=(v(x,t),0,0),\quad \textbf{U}=(u(x,t),0,0),\quad \phi_1=\phi(x,t),\quad p=p(x,y,z,t).
\end{equation}
The deformation gradient is 
\begin{equation}\label{eqn_1dF}
F=\hbox{diag}(\hbox{det}F(x,t), 1,1).
\end{equation}
From (\ref{mass-lagrangian}), $\phi(x,t)\det F(x,t)=\phi_I$. We can also write down the stress tensors (\ref{stress1}) and (\ref{stress2}) in the one-dimension formulation
\begin{eqnarray}
\Cauchy_1&&= \phi  \left\{2 s\left(\phi_I^2 \phi^{-2}+2\right)^{s-1}\hbox{diag}\left(\phi_I^2 \phi^{-2},1,1\right)-\alpha _0 \phi_I ^{-r} r \phi {}^r\Id+\beta _0 \frac{\phi_I }{\phi } \Id+\beta_1  q \phi_I ^q\phi ^{-q}\Id\right\}\nonumber\\
&&\quad \quad \ \ - \LCB \phi \LB \LC {K_BT\over 2V_m}\chi_0(1 - \phi) + {K_BT\over N_1V_m}\log\phi + {K_BT\over N_1V_m} + 2\chi_1\phi(1 - \phi) + 3\chi_2\phi^2(1 - \phi) \RC \right.\right. \nonumber\\
&&\quad \quad \quad \quad \ \ \left. - \LC {K_BT\over 2V_m}\chi_0\phi + {K_BT\over N_2V_m} \log(1-\phi) + {K_BT\over N_2V_m} + \chi_1\phi^2 + \chi_2\phi^3 \RC\RB \label{1dstress1}\\
&&\quad \quad \quad \ \ - \LC  {K_BT\over 2V_m} \chi_0\phi(1 - \phi) +  {K_BT\over N_1V_m}\phi\log\phi +  {K_BT\over N_2V_m}(1 - \phi)\log(1 - \phi) \RC \nonumber\\
&&\quad \quad \quad \ \  + \chi_1\phi^2(1 - \phi) + \chi_2\phi^2(1 - \phi) + p\phi \Big\} \Id, \nonumber\\
\Cauchy_2&&= -p(1-\phi) \Id. \label{1dstress2}
\end{eqnarray}

The second and third equations of (\ref{eqn_momentum}) indicate that $p=p(x,t)$ is independent of $y$ and $z$. Moreover, equation (\ref{eqn_incomp}) together with the first component of equation (\ref{eqn_newfields}) gives $v=v(t)$. Prescribing the initial condition $v(0)=0$ leads to $v(t)=0$ for all $t>0$, provided that $\nabla\cdot\mathcal{T}=0$ holds. The latter determines $p$ in terms of $\phi$ and $u$, up to a constant. In this way we arrive at the following system for $\phi$ and $u$
\begin{equation}\label{freebdry}
\left\{\begin{array}{ll}
\phi_t+\LB \phi(1-\phi)u\RB_x=0,\\\\
\displaystyle u_t+\LB\frac12 u^2(1-2\phi)-G(\phi)\RB_x=-\frac{\beta u}{\phi(1-\phi)},
\end{array}\right.
\end{equation}
where
\begin{eqnarray}\label{eqn_G}
G(\phi )
&=&\frac{K_BT \log(1-\phi )}{N_2  V_m }-\frac{K_BT \log(\phi )}{N_1  V_m }+(q-1) \beta_1 \phi_I ^q  \phi ^{-q}-(1+r) \alpha _0 \phi_I ^{-r} \phi ^r \nonumber\\
&&+\left(2+\frac{\phi_I ^2}{\phi ^2}\right)^s \left(\frac{2 s \phi_I ^2}{\phi_I ^2+2 \phi ^2}-1\right) +\frac{K_BT\chi_0 \phi}{V_m } - 2\chi_1\phi + 3 (\chi_1 - \chi_2) \phi^2 + 4 \chi_2 \phi^3.
\end{eqnarray}

\subsection{Boundary conditions}\label{subsec_BC}
We assume initially, the polymer occupies the domain $\Omega_0=\{(x,y,z):\ -L<x<L\}$, and the solvent is in $|x|>L$. At a later time $t>0$, the gel occupies the region $\Omega_t=\{(x,y,z):\ -S_1(t)<x<S_2(t)\}$, where $x=S_{1,2}(t)$ are the positions of the interface between the gel and the pure solvent. Therefore inside the polymer region $-S_1(t)<x<S_2(t)$ equations (\ref{freebdry}) hold for $\phi$ and $u$.

The general boundary conditions are described in Section \ref{subsec_bdrycond}. Here we further simply the problem by ignoring the inertial effects from the system and hence the stress balance (\ref{linear-momentum -gamma}) now becomes
\begin{equation}\label{eqn_stressbdry}
(\mathcal{T}_1+\mathcal{T}_2)\textbf{n}=\mathcal{T}\textbf{n}, \quad \hbox{on}\  \Gamma.
\end{equation}


The other boundary conditions we impose here for our problem characterize the degree of permeability of the interface (see also, for instance, \cite{Doi, YD1, YD2}). Again with no inertial effects, (\ref{etaw}) becomes
\begin{equation}\label{perm}
\eta_\perp w=\mb{n}\cdot (\Cauchy\mb{n}) - \mb{n}\cdot\paren{\frac{\Cauchy_2}{\phi_2}}\mb{n}.
\end{equation}
We now classify the boundary permeability in the following way
\begin{enumerate}
\item[(1)] The interface is fully permeable if $\eta_\perp=0$. Thus
\begin{equation}\label{eqn_fullyper}
\mb{n}\cdot (\Cauchy\mb{n}) - \mb{n}\cdot\paren{\frac{\Cauchy_2}{\phi_2}}\mb{n}=0, \quad \hbox{on}\  \Gamma.
\end{equation}
\item[(2)] The interface is impermeable if $\eta_\perp=\infty$. In this case $w=0$, that is
\begin{equation}\label{eqn_imper}
\textbf{v}_1\cdot\mb{n}=\textbf{v}_2\cdot\mb{n}, \quad \hbox{on}\ \Gamma.
\end{equation}
\item[(3)] The interface is semipermeable for $\eta_\perp\in(0, \infty)$.
\end{enumerate}

In this paper we will consider the fully permeable interface. Hence from (\ref{eqn_stressbdry}) and (\ref{eqn_fullyper}) we know that
\begin{eqnarray*}
\mb{n}\cdot\LC \Cauchy_1-{\phi_1\over\phi_2}\Cauchy_2 \RC\mb{n}=0.
\end{eqnarray*}

In one dimension, plugging in (\ref{1dstress1}), (\ref{1dstress2}) and the above we obtain
\begin{eqnarray}\label{eqn_saturation}
0\ && = \phi  \left\{2 s\left(\phi_I^2 \phi^{-2}+2\right)^{s-1}\phi_I^2 \phi^{-2} -\alpha _0 \phi_I ^{-r} r \phi {}^r+\beta _0 \frac{\phi_I }{\phi } +\beta_1  q \phi_I ^q\phi ^{-q} \right\}\nonumber\\
&&\quad \quad \ \ - \LCB \phi \LB \LC {K_BT\over 2V_m}\chi_0(1 - \phi) + {K_BT\over N_1V_m}\log\phi + {K_BT\over N_1V_m} + 2\chi_1\phi(1 - \phi) + 3\chi_2\phi^2(1 - \phi) \RC \right.\right. \nonumber\\
&&\quad \quad \quad \quad \ \ \left. - \LC {K_BT\over 2V_m}\chi_0\phi + {K_BT\over N_2V_m} \log(1-\phi) + {K_BT\over N_2V_m} + \chi_1\phi^2 + \chi_2\phi^3 \RC\RB \\
&&\quad \quad \quad \ \ - \LC  {K_BT\over 2V_m} \chi_0\phi(1 - \phi) +  {K_BT\over N_1V_m}\phi\log\phi +  {K_BT\over N_2V_m}(1 - \phi)\log(1 - \phi) \RC \nonumber\\
&&\quad \quad \quad \ \  + \chi_1\phi^2(1 - \phi) + \chi_2\phi^2(1 - \phi) \Big\}. \quad \hbox{on }\Gamma. \nonumber
\end{eqnarray}
Therefore we can determine the saturation value $\phi=\phi^*$ at the interface. In other words,  
the locations of the interface $x=-S_1(t)$ and $x=S_2(t)$ are determined by the saturation value $\phi^*$, i.e. $\phi(t, -S_1(t)) = \phi(t, S_2(t))=\phi^*$. 

The kinematic boundary condition asserts that the interface moves with the speed of the polymer, which means $-S'_1(t)=[1-\phi(t, -S_1(t))]u(t, -S_1(t))$ and $S_2'(t)=(1-\phi(t, S_2(t)))u(S_2(t),t)$. Therefore we have obtained the initial and boundary conditions as follows
\begin{equation}\label{IBcond}
\left\{\begin{array}{l}
\phi(x,t)=\phi^*, \hbox{ at } x=-S_1(t), S_2(t)\\
S_1(0)=L, \quad S_1'(t)=-[1-\phi(t, -S_1(t))]u(t, -S_1(t))\\
S_2(0)=L, \quad S_2'(t)=[1-\phi(t, S_2(t))]u(t, S_2(t))\\
\phi(x,0)=\phi^0,\ \ u(x,0)=u^0, \hbox{ for } -L<x<L.
\end{array}\right.
\end{equation}

\section{The transformed problem}\label{sec_fix}

In this section we aim to setup a fixed-boundary problem associated to (\ref{freebdry}) and (\ref{IBcond}) and establish the local-wellposedness of strong solutions.

To transform the free boundary to a fixed boundary, we perform the following change of coordinates
\begin{eqnarray}\label{changecoord}
y=\int_{-S_1(t)}^x \phi(z,t)\ dz, \ \ \tau=t.
\end{eqnarray}
Because $\int^{S_2(t)}_{-S_1(t)}\phi\ dz$ gives the total mass of the polymer, we may normalize that to be 1. In this way the free domain $(-S_1(t), S_2(t))$ becomes the fixed domain $(0,1)$. Therefore the free boundary problem now turns into
\begin{equation}\label{eqn:fixed1}
\left\{\begin{array}{l}
\phi_{\tau}+\phi^2(1-\phi)u_y-\phi^2\phi_yu=0,\\
u_{\tau}-\phi^2uu_y-u^2\phi\phi_y-G'(\phi)\phi\phi_y=\frac{-\beta u}{\phi(1-\phi)},\\
\phi(y,\tau)=\phi^*, \hbox{ for } y=0,1\\
\phi(y,0)=\phi^0,\ \ u(y,0)=u^0, \hbox{ for } 0<y<1.
\end{array}\right.
\end{equation}

To further rewrite the system, we let $\psi=1/\phi$ and $f(s)$ satisfy that $f'(s)=sG'(s)$. Then let $F(s)=f(1/s)$. In this way the above system becomes the following initial-boundary value problem
\begin{equation}\label{eqn:fixed2}
\left\{\begin{array}{l}
\left(\begin{array}{c}
                        \psi \\ u
                        \end{array}\right)_{\tau}+\left(\begin{array}{c}
                                                               \displaystyle    -\left( 1-\frac{1}{\psi} \right)u \\ \displaystyle -\frac{u^2}{2\psi^2}-F(\psi)
                                                                  \end{array}\right)_y=
\left(\begin{array}{c}
        0 \\ \displaystyle \frac{-\beta u\psi}{(1-\frac{1}{\psi})}
        \end{array}\right),\hbox{ in } \ (0,1)\times(0,T)\\\\
\psi=\psi*, \hbox{ at } y=0,1, \\\\
\left(\begin{array}{c}
                        \psi \\ u
                        \end{array}\right)\Big|_{\tau=0}=\left(\begin{array}{c}
                        \psi^0 \\ u^0
                        \end{array}\right), \hbox{ for } 0<y<1,
\end{array}\right.
\end{equation}
where $\psi^*=1/\phi^*$ and $\psi^0=1/\phi^0$.
The gradient matrix is
\begin{eqnarray*}
A(\psi,u)=\left(\begin{array}{cc}
                  \displaystyle      -\frac{u}{\psi^2} & \displaystyle \frac{1-\psi}{\psi} \\\\
                  \displaystyle     \frac{u^2+G'(1/\psi)}{\psi^3} & \displaystyle  -\frac{u}{\psi^2}
                        \end{array}\right)
\end{eqnarray*}
with eigenvalues
\begin{eqnarray}\label{evalues}
\lambda_{1,2}(\psi, u)=\frac{-u\mp
\sqrt{\Big[u^2+G'(1/\psi)\Big](1-\psi)}}{\psi^2},
\end{eqnarray}
and the corresponding left and right eigenvectors are
\begin{eqnarray}\label{leftevectors}
L_{1,2}(\psi,
u)=\Big(\mp\frac{1}{\psi}\sqrt{\frac{u^2+G'(1/\psi)}{1-\psi}},
1\Big) 
\end{eqnarray}
and 
\begin{eqnarray}\label{rightevectors}
R_{1,2}(\psi, u)=\left(\begin{array}{c}
                        \displaystyle\mp\psi\sqrt{\frac{1-\psi}{u^2+G'(1/\psi)}} \\\\
                        1
                        \end{array}\right).
\end{eqnarray}

In the range of physical parameters corresponding to semi-dry polymer we have $0<\phi<1$, hence $1<\psi$. Thus the system (\ref{eqn:fixed2}) is hyperbolic if
\begin{equation}\label{eqn:hypcond1}
u^2+G'(1/\psi)<0.
\end{equation}
Hence $G'(\phi)<0$ will be needed to guarantee hyperbolicity of the governing system, and therefore is a requirement for the propagation of the swelling front towards the solvent region. It turns out that this condition is satisfied for polymer data (see Fig.1). However in the case of polysaccharide data, there may be multiple quantities $\phi_c$ such that $G'(\phi_c)=0$ (see Fig.2).  
This may be interpreted in terms of the onset of deswelling, observed in bacteria motility phenomenon \cite{Ho}; it may also be associated with volume phase transitions observed in systems with a small elastic shear modulus \cite{LT}.

\begin{figure}[h]
\begin{minipage}{.4\textwidth}
\begin{center}
\includegraphics[scale=.33]{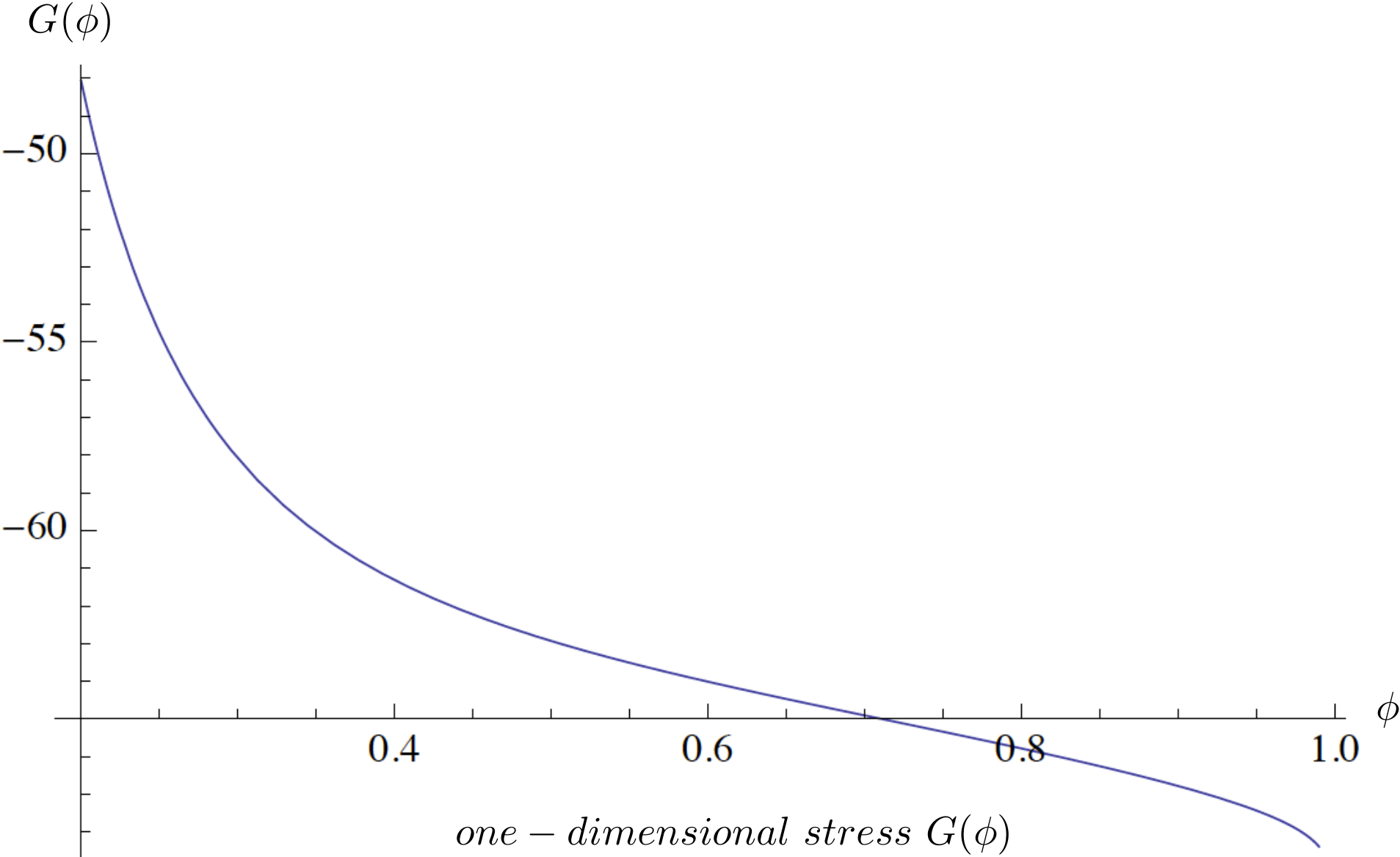}
\end{center}
\end{minipage}\hspace{1.2cm}
\begin{minipage}{.4\textwidth}
\begin{center}
\includegraphics[scale=.28]{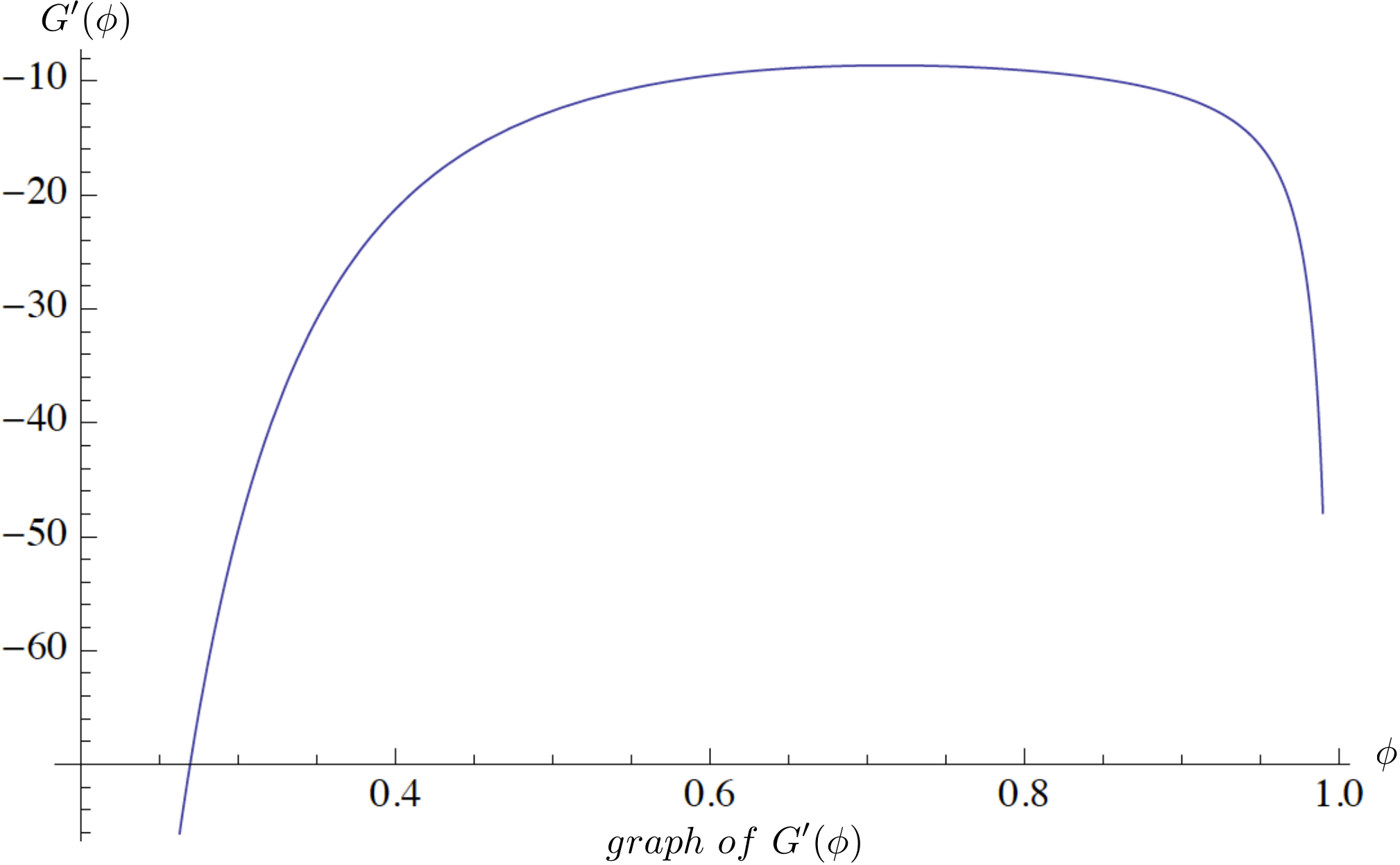}
\end{center}
\end{minipage}
\caption{$G$ and $G'$ for polymer data}
\end{figure}

\begin{figure}[h]
\begin{minipage}{.4\textwidth}
\begin{center}
\includegraphics[scale=.34]{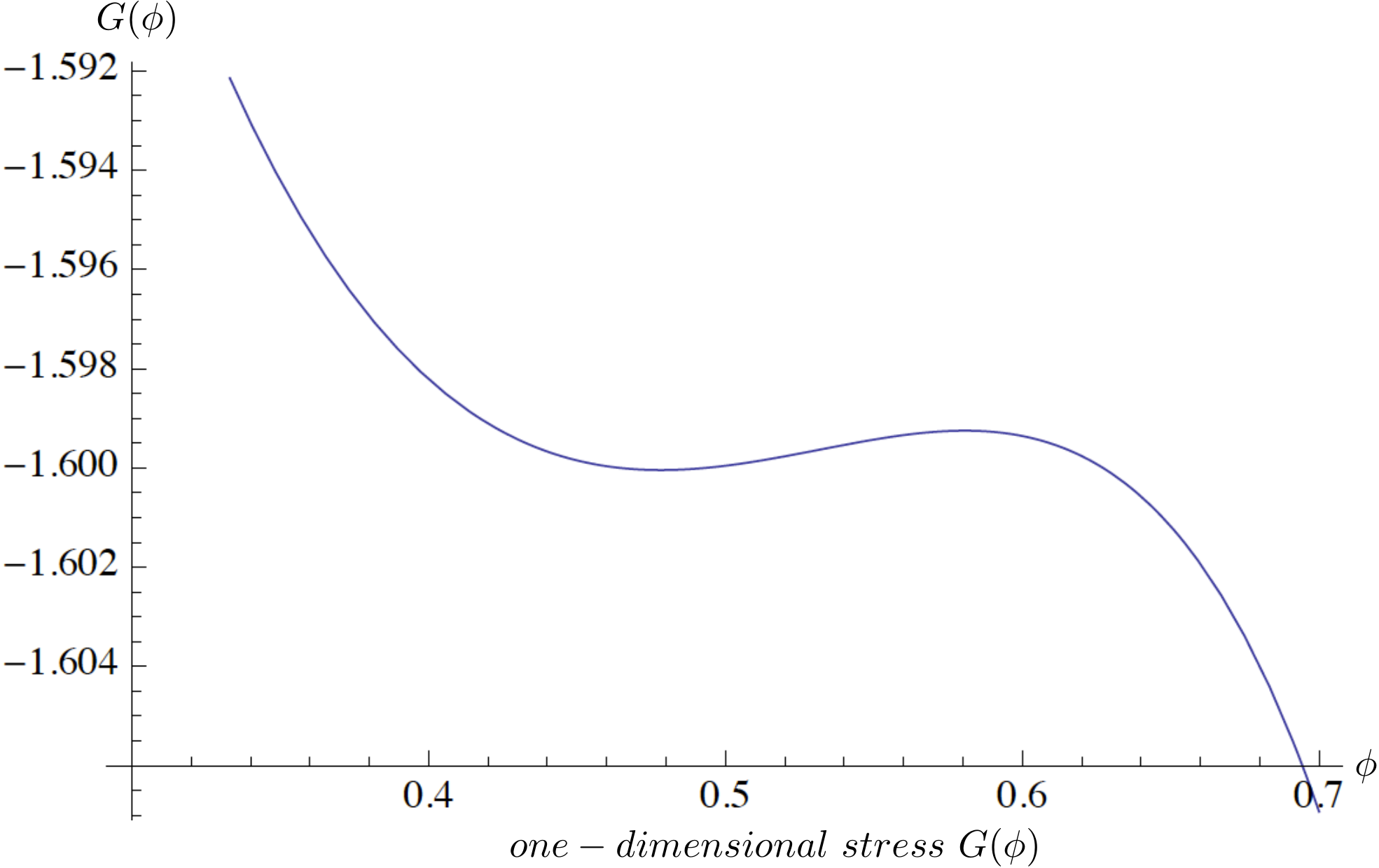}
\end{center}
\end{minipage}\hspace{1.2cm}
\begin{minipage}{.4\textwidth}
\begin{center}
\includegraphics[scale=.28]{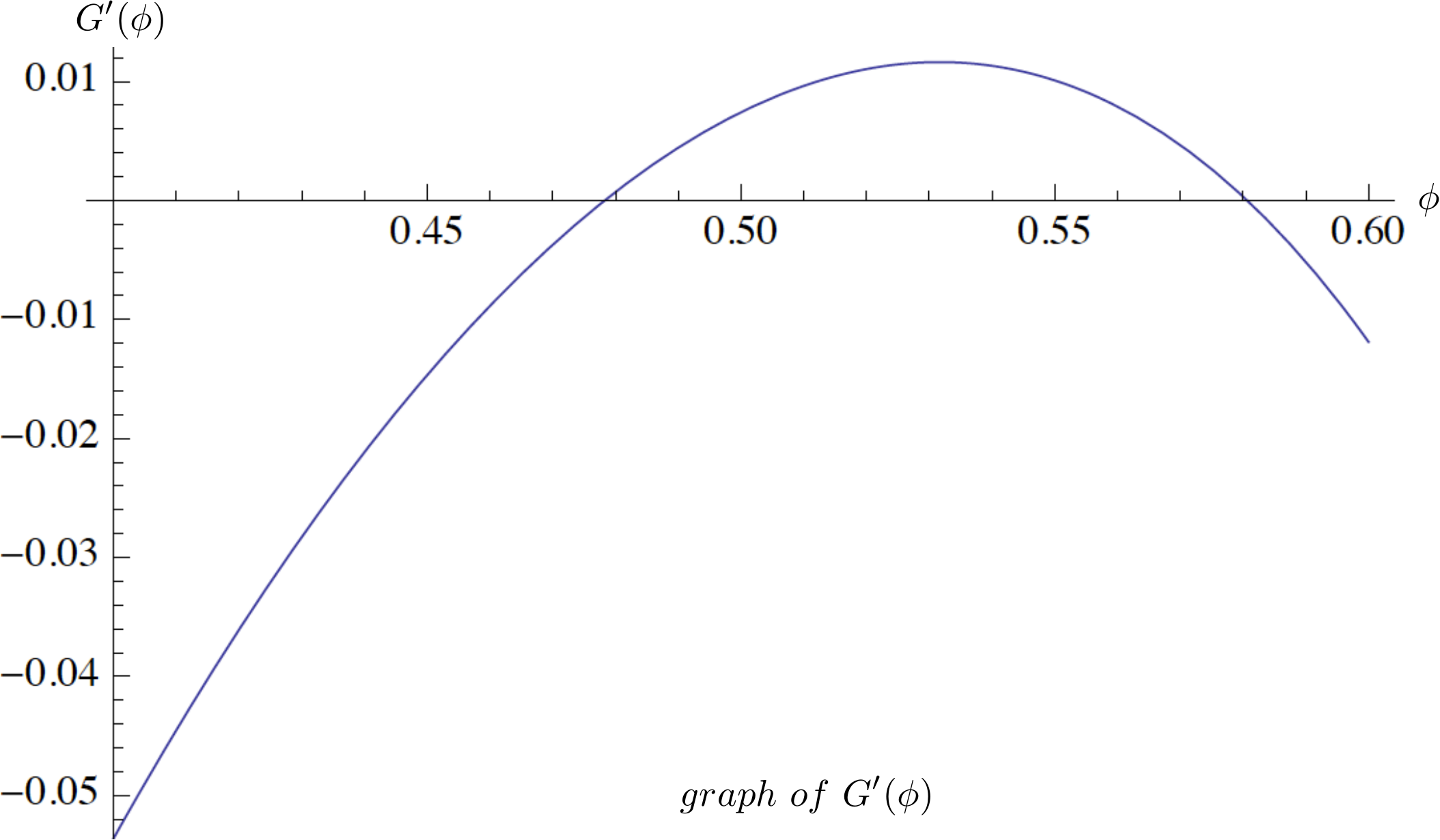}
\end{center}
\end{minipage}
\caption{$G$ and $G'$ for polysaccharide data}
\end{figure}

{\small\begin{center}
\begin{tabular}{|c|c|c|c|c|c|c|c|c|c|c|c|}\hline\rule[-2mm]{0cm}{6mm}
parameter & \hspace{.05cm} $N_1$ \hspace{.05cm} & \hspace{.05cm} $N_2$ \hspace{.05cm} & \hspace{.05cm} $q$ \hspace{.05cm} & \hspace{.05cm} $s$ \hspace{.05cm} & \hspace{.05cm} $r$ \hspace{.05cm} & \hspace{.05cm} $\alpha$ \hspace{.05cm} & \hspace{.05cm} $\beta_1$ \hspace{.05cm} & \hspace{.05cm} $\phi_I$ \hspace{.05cm} & \hspace{.05cm} $\chi_0$ \hspace{.05cm} & \hspace{.05cm} $\chi_1$ \hspace{.05cm} & \hspace{.05cm} $\chi_2$ \hspace{.05cm} \\ 
\hline\rule[-2mm]{0cm}{6mm} polymer & \hspace{.05cm} $1000$ \hspace{.05cm} & \hspace{.05cm} $1$ \hspace{.05cm} & \hspace{.05cm} $N_1$ \hspace{.05cm} & \hspace{.05cm} $6$ \hspace{.05cm} & \hspace{.05cm} $1.25$ \hspace{.05cm} & \hspace{.05cm} $0.001$ \hspace{.05cm} & \hspace{.05cm} $20$ \hspace{.05cm} & \hspace{.05cm} $0.05$ \hspace{.05cm} & \hspace{.05cm} $0.467$ \hspace{.05cm} & \hspace{.05cm} $0.593$ \hspace{.05cm} & \hspace{.05cm} $-0.42$ \hspace{.05cm} \\ 
\hline\rule[-2mm]{0cm}{6mm}
polysaccharide & \hspace{.05cm} $1000$ \hspace{.05cm} & \hspace{.05cm} $1$ \hspace{.05cm} & \hspace{.05cm} $2$ \hspace{.05cm} & \hspace{.05cm} $0.6$ \hspace{.05cm} & \hspace{.05cm} $1.25$ \hspace{.05cm} & \hspace{.05cm} $0.001$ \hspace{.05cm} & \hspace{.05cm} $0.002$ \hspace{.05cm} & \hspace{.05cm} $0.05$ \hspace{.05cm} & \hspace{.05cm} $0.446$ \hspace{.05cm} & \hspace{.05cm} $0.106$ \hspace{.05cm} & \hspace{.05cm} $-0.02$ \hspace{.05cm} \\ \hline
\end{tabular}\\
{\small parameter values}
\end{center}}

\medskip

In concern with the local-wellposedness of system (\ref{eqn:fixed2}), we need to check the
following conditions (see \cite{BD}):

\begin{itemize}
\item[C1.] Non-characteristic condition. The matrix $A(\psi, u)$ is non-singular for $(\psi, u)$ in a certain proper domain $\mathcal{M}$.


\item[C2.] Normality. The boundary matrix $B$ is of constant, maximal rank and
\begin{eqnarray*}
\mathbb{R}^2&&=\hbox{ker}B\oplus E^s(A(\psi, u))\ \ \hbox{ at } \ y=1\\
&&=\hbox{ker}B\oplus E^u(A(\psi, u))\ \ \hbox{ at }\ y=0, 
\end{eqnarray*}
where $E^s(A(\psi, u))$ is the stable subspace of $A(\psi, u)$ and
$E^u(A(\psi, u))$ is the unstable subspace of $A(\psi, u)$.

Note that in our problem, the boundary matrix is 
\begin{eqnarray*}
B=\left(\begin{array}{cc}
        1 & 0\\
        0 & 0
        \end{array}
\right).
\end{eqnarray*}

\item[C3.] Uniform Kreiss-Lopatinski\u\i \ (UKL) condition. for all $(\psi, u)\in \mathcal{M}$ 
there exists $C>0$ so that
\begin{equation}\label{eqn:UKL}
\|V\|\leq C\|BV\|
\end{equation}
for all $V$ in the unstable subspace of $A^{-1}(\psi,u)$ at $y=0$ and for all $V$ in the
stable subspace of  $A^{-1}(\psi,u)$ at $y=1$.

\end{itemize}

Consider now our problem (\ref{eqn:fixed2}), the first two conditions (C1) and (C2) are equivalent to
\begin{equation}\label{cond:evalue}
\lambda_1(\psi,u)<0<\lambda_2(\psi,u),
\end{equation}
which is, from (\ref{evalues})
\begin{equation}\label{cond:nonchar_normal}
u^2<\frac{1-\psi}{\psi}G'(1/\psi).
\end{equation}
Notice that since $\psi>0$, (\ref{cond:nonchar_normal}) implies that $G'(1/\psi)<0$ and moreover the hyperbolicity condition (\ref{eqn:hypcond1}).

As for the third condition (C3), the stable and unstable subspaces of $A^{-1}(\psi,u)$ is spanned by $R_{1}$ and $R_2$ (as defined in (\ref{rightevectors})) respectively. Hence the UKL condition (\ref{eqn:UKL}) is
satisfied when there exists a $\gamma>0$ such that
\begin{equation}\label{cond:UKL}
\psi\sqrt{\frac{1-\psi}{u^2+G'(1/\psi)}}\geq \gamma.
\end{equation}

In order to establish the wellposedness of the initial-boundary value problem (\ref{eqn:fixed2}) in some strong Sobolev space $H^m$ with $m>2$ being some integer, the data should satisfy the compatibility condition
\begin{eqnarray*}
\partial^p_t\psi^*(y,0)=\partial^p_t\psi(y,0), \quad \hbox{at }\ y=0,1,
\end{eqnarray*}
for all $p\in\{0,1,\ldots, m-1\}$. Since $\psi^*$ is some constant, the above condition is simply
\begin{equation}\label{eqn_compa}
\left\{\begin{array}{l}
\psi^*=\psi^0(1)=\psi^0(0)\\
\partial^p_t\psi(y,0)=0, \quad \hbox{at }\ y=0,1,\  \hbox{for all }\ p\in\{1,\ldots, m-1\}.
\end{array}\right.
\end{equation}

We can now state our local-wellposedness result.
\begin{theorem}\cite{BD}\label{thm:localwellposedness}
If $m>2$ is an integer, then for all $(\psi^0, u^0)\in
H^{m+1/2}([0,1])\times H^{m+1/2}([0,1])$ satisfying (\ref{cond:nonchar_normal}), (\ref{cond:UKL}), and the compatibility condition (\ref{eqn_compa}), there exists $T>0$ such that the problem
(\ref{eqn:fixed2}) admits a unique solution $u\in
H^m([0,1]\times[0,T])$.
\end{theorem}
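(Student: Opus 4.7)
The plan is to specialize the abstract local existence theorem for quasilinear first-order hyperbolic initial-boundary value problems from \cite{BD} to the system (\ref{eqn:fixed2}), treating the algebraic right-hand side as a Lipschitz source. I would first recast (\ref{eqn:fixed2}) in the canonical form $U_\tau + A(U)U_y = g(U)$ with $U=(\psi,u)^T$, $A(\psi,u)$ as displayed just before (\ref{evalues}), source $g = (0,\,-\beta u\psi/(1-1/\psi))^T$, and constant Dirichlet boundary matrix $B = \mathrm{diag}(1,0)$ enforcing $\psi = \psi^*$ at $y=0,1$. Because the initial profile satisfies $\psi^0 \geq \psi^* > 1$, a short continuity argument keeps $\psi$ uniformly bounded away from $1$ on the existence interval, so $g$ is smooth and Lipschitz in a tubular neighborhood of the data, and will be absorbed by the standard Duhamel/energy apparatus in \cite{BD}.

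Next I would verify the three structural hypotheses (C1)--(C3) under the standing assumptions (\ref{cond:nonchar_normal}) and (\ref{cond:UKL}). A direct computation from (\ref{evalues}) gives $\lambda_1\lambda_2 = [u^2\psi - (1-\psi)G'(1/\psi)]/\psi^4$, which is negative by (\ref{cond:nonchar_normal}); note that $\psi>1$ forces $1-\psi<0$, so (\ref{cond:nonchar_normal}) in turn forces $G'(1/\psi)<0$ and hence $u^2+G'(1/\psi)<0$, making the radical in (\ref{evalues}) real. Thus $\lambda_1 < 0 < \lambda_2$ are real and distinct, which is the non-characteristic condition (C1). For normality (C2), $\ker B = \mathrm{span}\{(0,1)^T\}$, while the inflow subspace at $y=0$ is $\mathrm{span}\{R_2\}$ (positive eigenvalue) and at $y=1$ is $\mathrm{span}\{R_1\}$ (negative eigenvalue); from (\ref{rightevectors}) both eigenvectors have nonzero first component under (\ref{cond:nonchar_normal}), so the required direct-sum decompositions $\mathbb R^2 = \ker B \oplus E^{u}(A)|_{y=0} = \ker B \oplus E^{s}(A)|_{y=1}$ hold.

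The uniform Kreiss-Lopatinski\u\i\ condition (C3) demands a uniform lower bound controlling vectors in the inflow subspace by their image under $B$; since this subspace is one-dimensional at each endpoint, the required inequality $\|V\| \leq C\|BV\|$ reduces to a uniform lower bound on the ratio between the first component of $R_{1,2}$ and its norm, which is exactly the content of (\ref{cond:UKL}). With (C1)--(C3) verified and the compatibility conditions (\ref{eqn_compa}) ensuring matching of the time derivatives of $\psi$ with the constant boundary datum at the corners $(y,\tau)=(0,0)$ and $(1,0)$ up to order $m-1$, the abstract theorem of \cite{BD} applies and yields the claimed unique strong solution in $H^m([0,1]\times[0,T])$. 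The deep technical work in \cite{BD} -- construction of a smooth Kreiss symmetrizer for the linearized problem and closure of a nonlinear fixed-point iteration at the $H^m$ level -- is inherited wholesale; the main (though modest) issue specific to our setting is to confirm that (\ref{cond:nonchar_normal}) and (\ref{cond:UKL}) are preserved along the iteration, which is automatic because each is an open condition on the state $(\psi,u)$ and the iterates remain in a small $H^m$-tube around the initial profile on a short enough time window.
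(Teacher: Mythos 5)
Your proposal is correct and follows essentially the same route as the paper: the theorem is quoted from \cite{BD}, and the only work is to verify the non-characteristic condition (C1), normality (C2), and the uniform Kreiss--Lopatinski\u\i\ condition (C3) under (\ref{cond:nonchar_normal}) and (\ref{cond:UKL}), together with the compatibility conditions (\ref{eqn_compa}) --- exactly the verification carried out in the text preceding the theorem. Your additional computations (the sign of $\lambda_1\lambda_2$, the direct-sum decompositions against $\ker B$, and the reduction of UKL to the lower bound on the first component of $R_{1,2}$) agree with the paper's.
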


\section{Long time existence of classical solutions}\label{sec_global}

The Cauchy problem of system (\ref{freebdry}) was discussed in
\cite{CZ}. The authors showed that the Cauchy problem is
$L^1$-stable, and the source term is merely weakly dissipative.
However, due to the result in \cite{Da}, the global existence of BV
solutions can still be obtained.

In concern of classical $C^1$ solutions to the initial-boundary value problem of (\ref{eqn:fixed2}), the short time existence can be established using the general approach introduced in Chapter 4 of \cite{LY}. To obtain large time of global $C^1$ solutions, one needs to control the $C^1$-norm of solutions. It turns out that if the systems exhibits strong enough dissipation or boundary damping then solutions can be extended globally in time (see, for instance \cite{Li}). However in our case, the above two types of damping are both weak. What we obtain is the large-time existence and uniqueness of $C^1$ solutions to the system (\ref{eqn:fixed2}), provided that the
data are close enough the equilibrium state $(\psi^*, 0)$.

Let $\eta=\psi-\psi^*$. To simplify the notation we still choose $(x,t)$ as the space-time variable. Then (\ref{eqn:fixed2}) becomes 
\begin{equation}\label{eqn:fixed3}
\left\{\begin{array}{l}
\left(\begin{array}{c}
                        \eta \\ u
                        \end{array}\right)_{t}+A(\eta,u)\left(\begin{array}{c}
                        \eta \\ u
                        \end{array}\right)_{x}+
P(\eta,u)=0,\hbox{ in } \ (0,1)\times(0,T)\\\\
B\left(\begin{array}{c}
                        \eta \\ u
                        \end{array}\right)=\left(\begin{array}{c}
                        0 \\ 0
                        \end{array}\right), \hbox{ at } x=0,1, \\\\
\left(\begin{array}{c}
                        \eta \\ u
                        \end{array}\right)\Big|_{t=0}=\left(\begin{array}{c}
                        \eta^0 \\ u^0
                        \end{array}\right)=\left(\begin{array}{c}
                        \psi^0-\psi^* \\ u^0
                        \end{array}\right), \hbox{ for } 0<x<1,
\end{array}\right.
\end{equation}
where
\begin{eqnarray*}
A(\eta,u)=\left(\begin{array}{cc}
                 \displaystyle -\frac{u}{(\eta+\psi^*)^2} & \displaystyle \frac{1-(\eta+\psi^*)}{\eta+\psi^*} \\\\
                 \displaystyle \frac{u^2+G'(1/(\eta+\psi^*))}{(\eta+\psi^*)^3} & \displaystyle  -\frac{u}{(\eta+\psi^*)^2}
                \end{array}\right),\ \ \
P(\eta,u)=\left(\begin{array}{c}
        0 \\ \displaystyle  \frac{\beta u(\eta+\psi^*)^2}{(\eta+\psi^*)-1}
        \end{array}\right).
\end{eqnarray*}

Now we state our main result.

\begin{theorem}\label{thm_LTexist}
Suppose that $\psi^*$ satisfies 
\begin{equation}\label{eqn_condpsi*}
\frac{1-\psi^*}{\psi^*}G'(1/\psi^*)>0
\end{equation}
and the $C^1$-compatibility conditions of the initial and boundary data
\begin{equation}\label{c1compa}
\left\{\begin{array}{l}
\eta^0(0)=\eta^0(1)=0,\\\\
\displaystyle -{u^0\over(\psi^*)^2}\eta^0_x+{1-\psi^*\over\psi^*}u^0_x=0.
\end{array}\right.
\end{equation}
hold.
Then for any $T_0>0$, there exists an $\varepsilon>0$ so that if  
\[
\|(\eta^0, u^0)\|_{C^1}\leq\varepsilon, \ \forall \ 0\leq x\leq 1,
\]
then system (\ref{eqn:fixed3}) admits a unique classical $C^1$ solution for $t\in[0,T_0)$.
\end{theorem}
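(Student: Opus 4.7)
The plan is to combine the local well-posedness from Theorem~\ref{thm:localwellposedness} with a uniform $C^1$ a priori bound that permits extension of the solution up to the prescribed time $T_0$. By the continuation principle for 1-D quasilinear hyperbolic initial--boundary value problems, a local $C^1$ solution of (\ref{eqn:fixed3}) persists as long as (i) $\|(\eta,u)(\cdot,t)\|_{C^1([0,1])}$ stays bounded, and (ii) the non-characteristic, normality and UKL conditions (\ref{cond:nonchar_normal})--(\ref{cond:UKL}) hold pointwise. Assumption (\ref{eqn_condpsi*}) is exactly the strict version of (\ref{cond:nonchar_normal})--(\ref{cond:UKL}) at the equilibrium $(\eta,u)=(0,0)$, so by continuity both hold on an open neighborhood $\mathcal{N}$ of the origin in $\mathbb{R}^2$. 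Thus it suffices to prove that, for $\varepsilon$ small enough in terms of $T_0$, the solution stays in a fixed compact sub-neighborhood of $(0,0)$ on $[0,T_0]$.

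\textbf{Reduction to characteristic form.} I would next diagonalize via Riemann-type invariants. Using the left eigenvectors (\ref{leftevectors}) evaluated at the current state, set
\[
w_i(x,t):=L_i(\eta,u)\cdot(\eta,u)^T,\qquad i=1,2,
\]
so that $(w_1,w_2)$ and $(\eta,u)$ are smoothly equivalent on $\mathcal{N}$. Because $L_i A=\lambda_i L_i$, multiplication of (\ref{eqn:fixed3}) on the left by $L_i$ produces
\[
\frac{d w_i}{d t}\bigg|_{\frac{d x}{d t}=\lambda_i(\eta,u)}= -L_i\cdot P(\eta,u)+N_i(\eta,u,w_{1,x},w_{2,x}),
\]
where $N_i$ is quadratic in its last two arguments and vanishes at $(\eta,u)=0$. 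Near equilibrium the source contributes a linear term proportional to $u=(w_1+w_2)/2+O(|\eta|\,|u|)$; this couples $w_1,w_2$ through the bulk damping coefficient $\beta(\psi^*)^2/(\psi^*-1)>0$ but does not strictly damp each $w_i$ individually.

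\textbf{Boundary reflections and $C^0$ bound.} The characteristics $C_i$ either originate at $t=0$ or upon reflection at $x\in\{0,1\}$. At either endpoint the Dirichlet condition $\eta=0$, combined with the definition of the invariants, gives $w_1=u=w_2$, i.e., a reflection of unit modulus: no contraction is produced at the boundary, which is the reason a global-in-time statement is out of reach here. Integrating the ODE above along the piecewise characteristic segments (each of length $O(1)$ in $t$, so $O(T_0)$ of them fit into $[0,T_0]$), applying the boundary matching at each reflection, and invoking Gronwall's inequality yields
\[
\|(\eta,u)(\cdot,t)\|_{C^0}\leq C_0\varepsilon\, e^{C_0 t},\qquad t\in[0,T^*),
\]
where $T^*$ is the maximal existence time and $C_0$ depends only on $\psi^*$ and the smooth functions $G,P$ on $\mathcal{N}$.

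\textbf{$C^1$ bound and bootstrap.} Differentiating the characteristic equations in $x$ gives Riccati-type inequalities of the form
\[
\frac{d}{d t}\bigl(|w_{1,x}|+|w_{2,x}|\bigr)\bigg|_{C_i}\leq C_1\bigl(|w_{1,x}|+|w_{2,x}|\bigr)+C_1\bigl(|w_{1,x}|+|w_{2,x}|\bigr)^2,
\]
together with boundary transmission laws at reflections that are consistent thanks to the $C^1$ compatibility conditions (\ref{c1compa}) (which guarantee no corner singularities are generated). A standard bootstrap --- assume $\|(\eta_x,u_x)(\cdot,t)\|_{C^0}\leq 2C_1\varepsilon\, e^{C_2 t}$ on $[0,T^*)$, then improve it to $C_1\varepsilon\, e^{C_2 t}$ while the quadratic contribution remains subdominant, which is ensured as long as $\varepsilon\, e^{C_2 t}\leq\delta_0$ for a fixed $\delta_0$ depending on $\mathcal{N}$ --- produces an a priori existence time of order $|\log\varepsilon|/C_2$. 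Choosing $\varepsilon\leq\delta_0\, e^{-C_2 T_0}$ then forces $T^*\geq T_0$ and keeps the solution in $\mathcal{N}$ throughout $[0,T_0]$. Uniqueness follows from the standard $L^1$-contraction for $C^1$ solutions of (\ref{eqn:fixed3}). The main obstacle is precisely the Riccati term $|w_{i,x}|^2$, which is the classical shock-formation mechanism; in the absence of a strictly dissipative boundary (reflection modulus equals $1$) and of damping on the $\eta$-component, this term, once combined with the exponential amplification coming from the cross-coupling in $-L_i\cdot P$, is what prevents the estimate from closing globally and forces the logarithmic lifespan.
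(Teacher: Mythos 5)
Your proposal is correct and follows essentially the same route as the paper: diagonalize with the left eigenvectors, integrate along characteristics while tracking the unit-modulus reflections forced by the Dirichlet condition $\eta=0$ (which gives $v_1=v_2$ and $w_1=w_2$ at $x=0,1$), control the quadratic terms in both the solution and its $x$-derivative, and close the estimate only for a time of order $|\log\varepsilon|$. The paper packages the derivative bound by introducing $w_i=L_i(\eta,u)(\partial_x\eta,\partial_x u)^T$ as a second diagonal system treated in parallel with $v_i$, and runs the continuation as a step-by-step extension from $D(T)$ to $D(T+T_1)$ rather than your single bootstrap on $[0,T^*)$, but these are cosmetic differences leading to the same smallness condition $\varepsilon\lesssim e^{-cT_0}$.
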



\begin{proof}
As is pointed at the beginning of this section, the local-in-time wellposedness of $C^1$ solutions can be proved using the idea from \cite{LY}. For the time being, suppose that on the existence domain of $C^1$
solution $\eta(x,t), u(x,t)$ we have
\begin{equation}\label{unif_bd}
|(\eta,u)(x,t)|\leq \varepsilon_0,
\end{equation}
where $\varepsilon_0>0$ is a suitably small number so that
(\ref{cond:nonchar_normal}) 
is satisfied for
$|\eta|,|u|\leq\varepsilon_0$. Because of condition (\ref{eqn_condpsi*}) and continuity, such an $\varepsilon_0$ exists. 

To get the large-time existence of solutions, it suffices to prove that we can choose $\varepsilon_0>0$ small enough so that for any fixed $\varepsilon$ with $0<\varepsilon\leq\varepsilon_0$, there exists some $\delta=\delta(\varepsilon)>0$ small such that if
\begin{equation}\label{initialest}
\|(\eta^0, u^0)\|_{C^1}\leq\delta, \ \forall \ 0\leq x\leq 1
\end{equation}
holds, then on the whole existence domain of the $C^1$ solution $(\eta,u)$ we have
\begin{equation}\label{globalest}
\|(\eta, u)(\cdot, t)\|_{C^1}\leq\varepsilon, \ \ \forall \ t\geq0.
\end{equation}

First we perform a diagonalization. Let
\begin{eqnarray*}
\left\{\begin{array}{l}
        v_i=L_i(\eta,u)(\eta,u)^T \ \ (i=1,2),\\
        w_i=L_i(\eta,u)(\partial_x\eta, \partial_xu)^T \ \ (i=1,2),
       \end{array}\right.
\end{eqnarray*}
where $L_i$ is the $i$-th left eigenvector defined in
(\ref{leftevectors}) with $\psi$ replaced by $\eta+\psi^*$. It is
not hard to see that $v=(w_1,v_2)$ and $w=(w_1,w_2)$ satisfy the
following system of diagonal form (cf. Chapter 3 in \cite{Li})
\begin{equation}\label{approx_diag}
\left\{\begin{array}{l}
\displaystyle \partial_t v_i+\lambda_i\partial_x v_i+\kappa(v_1+v_2)=\sum_{j,k=1}^2
c_{ijk}v_jv_k+\sum_{j,k=1}^2 d_{ijk}v_jw_k\\\\
\displaystyle \partial_t w_i+\lambda_i\partial_x w_i+\kappa(w_1+w_2)=\sum_{j,k=1}^2
\bar{c}_{ijk}w_jv_k+\sum_{j,k=1}^2 \bar{d}_{ijk}v_jw_k
\end{array}\right.
\end{equation}
for $i=1,2$, where
$$\kappa=\frac{\beta(\psi^*)^2}{2(\psi^*-1)}>0,$$
$c_{ijk}, d_{ijk}, \bar{c}_{ikj}$ and $\bar{d}_{ijk}$ are continuous
functions of $(\eta,u)$, $\lambda_i(t,x)=\lambda_i(\eta,u)$ (cf. (\ref{evalues}) with $\psi$ replaced by $\eta+\psi^*$). Denote the quadratic parts by 
\begin{eqnarray*}
&&Q_i(v,w)=\sum_{j,k=1}^2
c_{ijk}v_jv_k+\sum_{j,k=1}^2 d_{ijk}v_jw_k\ \ (i=1,2),\\
&&\bar{Q}_i(v,w)=\sum_{j,k=1}^2
\bar{c}_{ijk}w_jv_k+\sum_{j,k=1}^2 \bar{d}_{ijk}v_jw_k\ \
(i=1,2).
\end{eqnarray*}

The initial condition for $v$ and $w$ is obviously
\begin{eqnarray*}
\left\{\begin{array}{l}
        v_i|_{t=0}=v_i^0=L_i(\eta^0,u^0)(\eta^0,u^0)^T \ \ (i=1,2),\\
        w_i|_{t=0}=w_i^0=L_i(\eta^0,u^0)(\partial_x\eta^0, \partial_xu^0)^T \ \
        (i=1,2).
       \end{array}\right.
\end{eqnarray*}
Boundary condition for $v$:
\begin{eqnarray*}
\hbox{At }x=0,\ 1: \hspace{.2in} B\left(\begin{array}{c}
        L_1\\L_2
       \end{array}\right)^{-1}v=\left(\begin{array}{c}
        0\\0
       \end{array}\right).
\end{eqnarray*}
Thus
\begin{equation}\label{bdrycond_v}
\hbox{At }x=0,\ 1: \hspace{.2in} v_1=v_2.
\end{equation}
Differentiating with respect to $t$, we get the boundary condition
for $w$:
\begin{eqnarray*}
\left(\begin{array}{c}
        0\\0
       \end{array}\right)&=B\left(\begin{array}{c}
        \eta\\u
       \end{array}\right)_t=B\Big[-A(\eta,u)\left(\begin{array}{c}
                        \eta \\ u
                        \end{array}\right)_{x}-P(\eta,u)\Big]=-BA(\eta,u)\left(\begin{array}{c}
                        \eta \\ u
                        \end{array}\right)_{x}\\
&=-BA(\eta,u)\left(\begin{array}{c}
        L_1\\L_2
       \end{array}\right)^{-1}\left(\begin{array}{c}
                        w_1 \\ w_2
                        \end{array}\right).
\end{eqnarray*}
Therefore
\begin{equation}\label{bdrycond_w}
\hbox{At }x=0,\ 1: \hspace{.2in} w_1=w_2.
\end{equation}

Therefore in order to prove (\ref{globalest}), we only need to show that the same estimate holds for $\|(v,w)\|_{C^0}$:
\begin{equation}\label{globalest1}
|(v,w)(x,t)|\leq\varepsilon.
\end{equation}
provided that the initial data is small:
\begin{equation}\label{initialest1}
|(v^0, w^0)|\leq\delta, \ \forall \ 0\leq x\leq 1.
\end{equation}
From (\ref{unif_bd}), we may also assume that on the whole existence domain of the $C^0$ solution $(v,w)$,
\begin{equation}\label{unif_bd1}
|(v,w)(x,t)|\leq \varepsilon_0.
\end{equation}

Let
\begin{eqnarray*}
&&\lambda_{\min}=\min\{|\lambda_i(t,x)|:\ 0\leq t\leq T, 0\leq x\leq 1, i=1,2\},\\
&&\lambda_{\max}=\max\{|\lambda_i(t,x)|:\ 0\leq t\leq T, 0\leq x\leq 1, i=1,2\},\\
&&T_1=1/\lambda_{\max},\hspace{.2in} T_2=1/\lambda_{\min}.
\end{eqnarray*}
Since $\varepsilon_0$ is chosen so that the hyperbolicity condition (\ref{cond:nonchar_normal}) holds, we see that $\lambda_{\min}<0$ and $\lambda_{\max}>0$ and hence $T_1$ and $T_2$ are well-defined.

By continuity we know that we can certainly pick some $\delta>0$
small such that (\ref{globalest1}) holds on some time domain. Hence
to prove (\ref{globalest1}), it is only necessary to show that there
exists some $\varepsilon_0>0$ so small that for any fixed $T>0$, if
(\ref{globalest1}) holds on the domain $D(T)=\{(t,x):\ 0\leq t\leq
T, 0\leq x\leq 1\}$, then it still holds on $D(T+T_1)$, provided
that the $C^0$ solution $(v,w)$ exists on such domain.

For this purpose, let
\begin{eqnarray}\label{def:max}
&&V_i(t)=\max_{0\leq x\leq1}|v_i(x,t)|,\ \ W_i(t)=\max_{0\leq
x\leq1}|w_i(x,t)|, \ \ i=1,2, \nonumber\\
&&U_1(t)=\max\{V_1(t),V_2(t)\},\ \ \ \ \ \
U_2(t)=\max\{W_1(t),W_2(t)\}.
\end{eqnarray}

Suppose that the $C^0$ solution $(v,w)$ exists on $D(T+T_1)$ and let
$\xi=f_i(\tau; x,t)$ be the $i$-th characteristic passing through a
point $(x,t)\in D(T+\bar{T})$ with $T\leq t\leq T+T_1$. Then
\begin{equation}\label{chareqn}
\left\{\begin{array}{l}
\displaystyle \frac{d}{d\tau}f_i(\tau; x,t)=\lambda_i(\tau, f_i(\tau;x,t)),\\\\
\tau=t:\ \ f_i(t; x,t)=x.
\end{array}\right.
\end{equation}

From (\ref{cond:evalue}) we know that for $v_1(x,t)$ there are two
possibilities:

(1) The first characteristic $\xi=f_1(\tau; x, t)$ intersects the
interval $[0,1]$ on $x$-axis with the intersection point $(f_1(0;
x,t),0)$, see {Fig. 3}.

\begin{figure}[h]
\begin{center}
\includegraphics[scale=.4]{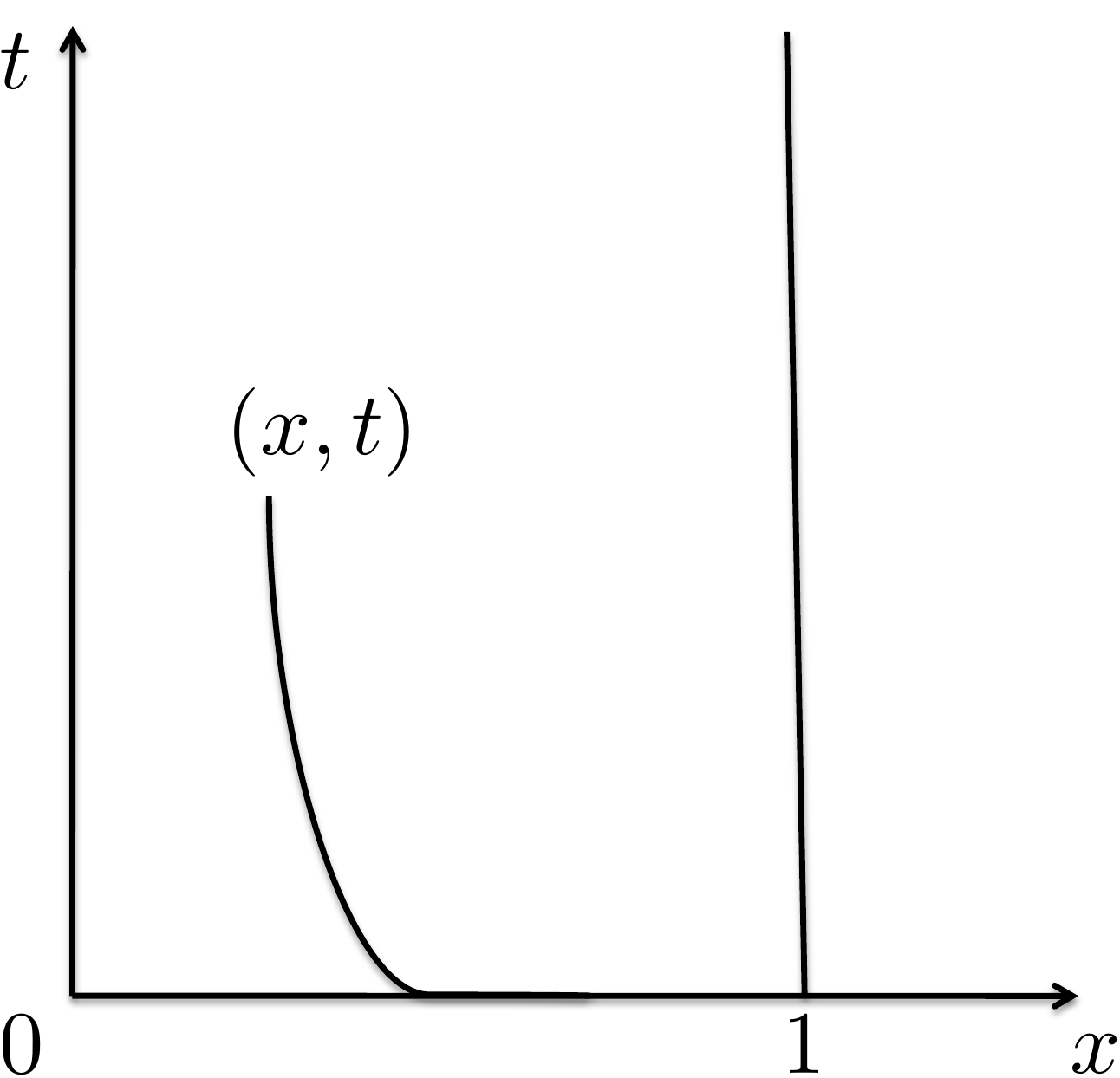}
\end{center}
\caption{}
\end{figure}

Integrating the first equation in (\ref{approx_diag}) along the
first characteristic $\xi=f_1(\tau; x, t)$ we get
\begin{eqnarray}
v_1(x,t)=&&\ e^{-\kappa t}v_1^0(f_1(0; x,t))-\int^t_0\kappa e^{-\kappa(t-\tau)}v_2(f_1(\tau; x,t),\tau)d\tau\nonumber\\
&&-\int^t_0e^{-\kappa(t-\tau)}Q_1(v,w)(f_1(\tau; x,t),\tau)d\tau.
\end{eqnarray}
From (\ref{initialest1}) and since (\ref{globalest1})  holds on $D(T)$, we have
\begin{eqnarray}\label{est_case1}
e^{\kappa t}|v_1(x,t)|&&\leq \ \delta+(e^{\kappa
T}-1)\varepsilon+C\varepsilon^2+\int^t_T\kappa
e^{\kappa\tau}V_2(\tau)d\tau
+C\int^t_Te^{\kappa\tau}\sum^2_{i=1}V_i^2(\tau)+W_i^2(\tau)d\tau \nonumber\\
&&\leq\ \delta+(e^{\kappa T}-1)\varepsilon+C\varepsilon^2+\int^t_T\kappa
e^{\kappa\tau}U_1(\tau)d\tau
+C\int^t_Te^{\kappa\tau}(U_1^2(\tau)+U_2^2(\tau))d\tau,
\end{eqnarray}
where $V_i, W_i$ and $U_i$ were defined in (\ref{def:max}).

(2) The first characteristic $\xi=f_1(\tau; x, t)$ intersects the
boundary $x=1$ at point $(1,\tau_1(x,t))$ where $(\tau_1(x,t))$
satisfies
\[ f_1(\tau_1(x,t);x,t)=1. \]
Obviously we have
\begin{equation}\label{timeest1}
T_2\geq t-\tau_1(x,t)\geq 0.
\end{equation}

In case (2), for the second characteristic $\xi=f_2(\tau;
1,\tau_1(x,t))$ passing through $(1,\tau_1(x,t))$, there are still
two possibilities:

(2a) This second characteristic intersects the interval $[0,1]$ on
the $x$-axis with the intersection point $(f_2(0;1,\tau_1(x,t)),0)$,
see {Fig. 4a}.
\begin{figure}[h]
\begin{minipage}{.48\textwidth}
\begin{center}
\includegraphics[scale=.4]{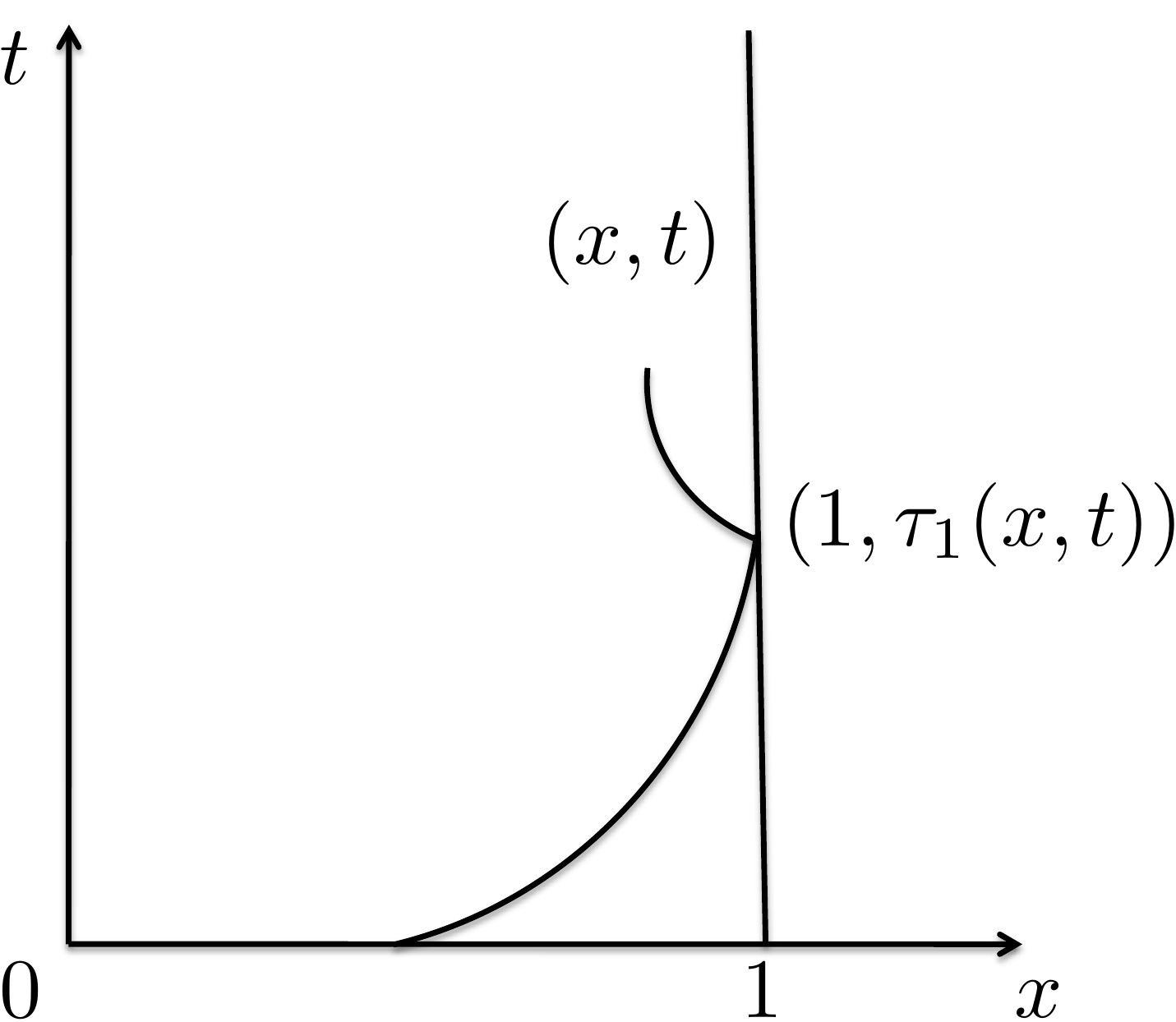}
{(a)}
\end{center}
\end{minipage}
\begin{minipage}{.48\textwidth}
\begin{center}
\includegraphics[scale=.4]{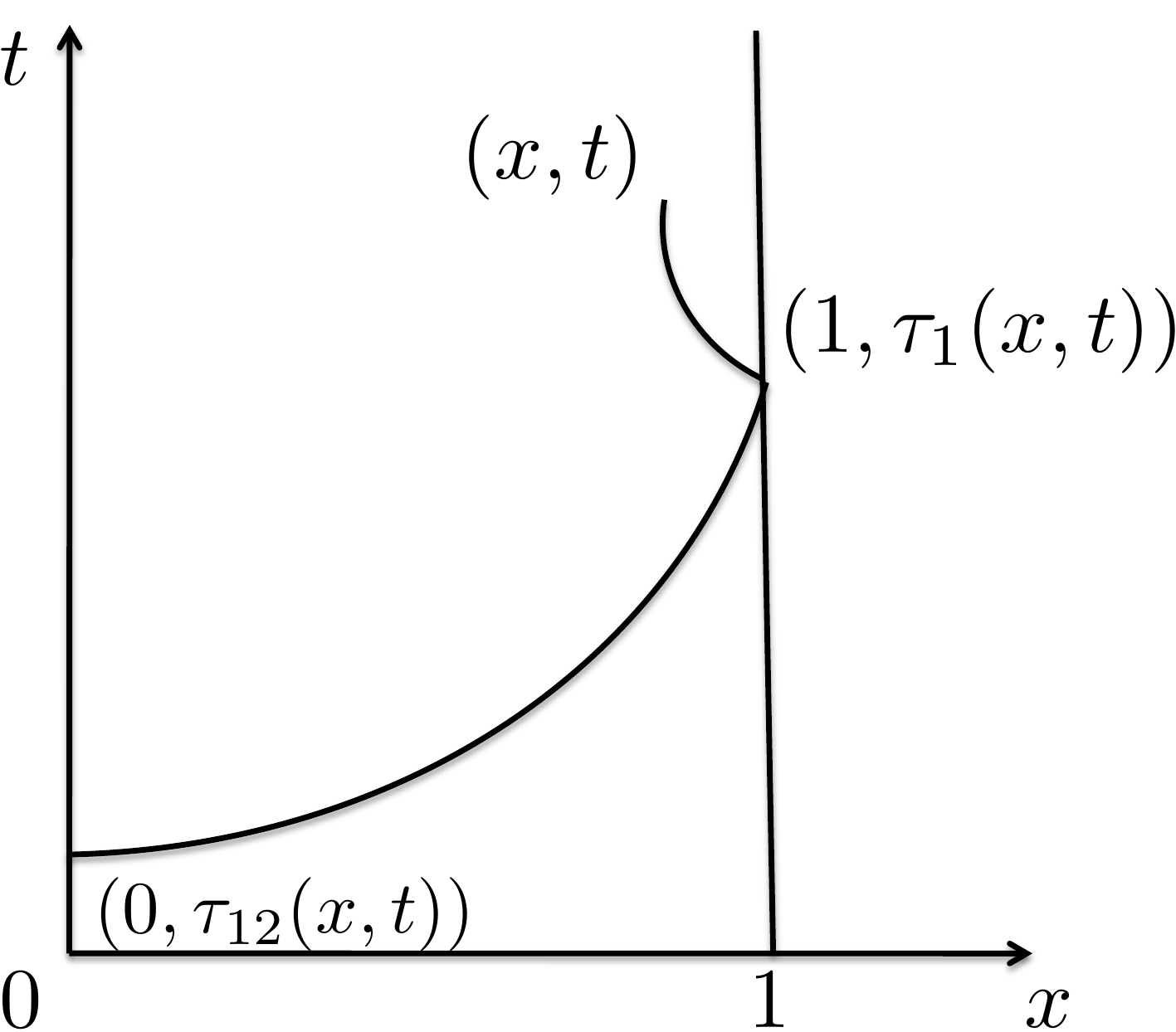}
{(b)}
\end{center}
\end{minipage}
\caption{}
\end{figure}

(2b) This second characteristic intersects the boundary $x=0$ at the
intersection point $(0, \tau_{12}(x,t))$ (see {Fig. 4b}),
where
\[ \tau_{12}(x,t)=\tau_2(\tau_1(x,t),1), \]
in which, $\tau_2(x,t)$ stands for the $t$-coordinate of the
intersection point of the second characteristic $\xi=f_2(\tau;x,t)$
passing through a point $(x,t)$ with the boundary $x=0$:
\[ f_2(\tau_2(x,t);x,t)=0. \]
We have
\begin{equation}\label{timeest2}
T_2\geq t-\tau_2(x,t)\geq 0.
\end{equation}

Therefore
\[ T_2\geq \tau_1(x,t)-\tau_{12}(x,t)\geq0. \]
Noting (\ref{timeest1}) we get
\[ 2T_2\geq t-\tau_{12}(x,t)\geq T_1. \]
Therefore for $T\leq t\leq T+T_1$,
\begin{equation}\label{timeest3}
0\leq \tau_{12}(x,t)\leq T,\ \ \ \ T-\tau_{12}(x,t)\leq 2T_2.
\end{equation}

In case (2a), using boundary condition (\ref{bdrycond_v}) we get
\begin{eqnarray*}
e^{\kappa t}v_1(x,t)&&=\
e^{\kappa\tau_1}v_1(1,\tau_1)-\int_{\tau_1}^{t}\kappa
e^{\kappa\tau}v_2(f_1(\tau;x,t),\tau)d\tau-\int^t_{\tau_1}e^{\kappa\tau}Q_1(v,w)(
f_1(\tau; x,t),\tau)d\tau\\
&&=\ e^{\kappa\tau_1}v_2(1,\tau_1)-\int_{\tau_1}^{t}\kappa
e^{\kappa\tau}v_2(f_1(\tau;x,t),\tau)d\tau-\int^t_{\tau_1}e^{\kappa\tau}Q_1(v,w)(
f_1(\tau; x,t),\tau)d\tau.
\end{eqnarray*}

Integrating the second equation in (\ref{approx_diag}) along the
second characteristic we have
\begin{eqnarray*}
e^{\kappa\tau_1}v_2(1,\tau_1)=&&\
v^0_2(f_2(0;1,\tau_1))-\int^{\tau_1}_0\kappa
e^{\kappa\tau}v_1(f_2(\tau;1,\tau_1),\tau)d\tau\\
&&\ -\int^{\tau_1}_0e^{\kappa\tau}Q_2(v,w)( f_2(\tau;
1,\tau_1),\tau)d\tau.
\end{eqnarray*}

Combing the above equalities we get
\begin{eqnarray*}
e^{\kappa t}v_1(x,t)=&&\ v^0_2(f_2(0;1,\tau_1))-\int^{\tau_1}_0\kappa
e^{\kappa\tau}v_1(f_2(\tau;1,\tau_1),\tau)d\tau-\int_{\tau_1}^{t}\kappa
e^{\kappa\tau}v_2(f_1(\tau;x,t),\tau)d\tau\\
&&\ -\int^{\tau_1}_0e^{\kappa\tau}Q_2(v,w)( f_2(\tau;
1,\tau_1),\tau)d\tau-\int^t_{\tau_1}e^{\kappa\tau}Q_1(v,w)(
f_1(\tau; x,t),\tau)d\tau.
\end{eqnarray*}

We have two cases:

(i) $\tau_1\leq T$. Then it is easy to see that we have the same
estimate as (\ref{est_case1}).\\

(ii) $\tau>T$. Then
\begin{eqnarray}\label{est_case2a}
e^{\kappa t}|v_1(x,t)|&&\leq \ \delta+(e^{\kappa
T}-1)\varepsilon+C\varepsilon^2+\int^{\tau_1}_T\kappa
e^{\kappa\tau}V_1(\tau)d\tau+\int^t_{\tau_1}\kappa
e^{\kappa\tau}V_2(\tau)d\tau \nonumber\\
&&\ \ \
+C\int^t_Te^{\kappa\tau}\sum^2_{i=1}V_i^2(\tau)+W_i^2(\tau)d\tau\nonumber\\
&&\leq\ \delta+(e^{\kappa T}-1)\varepsilon+C\varepsilon^2+\int^t_T\kappa
e^{\kappa\tau}U_1(\tau)d\tau
+C\int^t_Te^{\kappa\tau}(U_1^2(\tau)+U_2^2(\tau))d\tau,
\end{eqnarray}
which is again, the same as (\ref{est_case1}).\\

In case (2b), using the similar idea we obtain
\begin{eqnarray*}
e^{\kappa t}v_1(x,t)&&=\
e^{\kappa\tau_1}v_1(1,\tau_1)-\int_{\tau_1}^{t}\kappa
e^{\kappa\tau}v_2(f_1(\tau;x,t),\tau)d\tau-\int^t_{\tau_1}e^{\kappa\tau}Q_1(v,w)(
f_1(\tau; x,t),\tau)d\tau\\
&&=\ e^{\kappa\tau_1}v_2(1,\tau_1)-\int_{\tau_1}^{t}\kappa
e^{\kappa\tau}v_2(f_1(\tau;x,t),\tau)d\tau-\int^t_{\tau_1}e^{\kappa\tau}Q_1(v,w)(
f_1(\tau; x,t),\tau)d\tau.\\
&&=\
e^{\kappa\tau_{12}}v_2(0,\tau_{12})-\int^{\tau_1}_{\tau_{12}}\kappa
e^{\kappa\tau}v_1(f_2(\tau;1,\tau_1),\tau)d\tau-\int_{\tau_1}^{t}\kappa
e^{\kappa\tau}v_2(f_1(\tau;x,t),\tau)d\tau\\
&&\ \ \  -\int^{\tau_1}_{\tau_{12}}e^{\kappa\tau}Q_2(v,w)( f_2(\tau;
1,\tau_1),\tau)d\tau-\int^t_{\tau_1}e^{\kappa\tau}Q_1(v,w)(
f_1(\tau; x,t),\tau)d\tau.
\end{eqnarray*}

From (\ref{timeest3}) we know that $0\leq \tau_{12}\leq T$. Hence we
can again split case (2b) in the following two possibilities:

(i) $\tau_1\leq T$. Then the estimate for $|v_1(x,t)|$ is
\begin{eqnarray}\label{est_case2bi}
e^{\kappa t}|v_1(x,t)|&&\leq\ \delta+(e^{\kappa
T}-1)\varepsilon+C\varepsilon^2+\int^t_T\kappa
e^{\kappa\tau}V_2(\tau)d\tau+C\int^t_Te^{\kappa\tau}\sum^2_{i=1}V_i^2(\tau)+W_i^2(\tau)d\tau \nonumber\\
&&\leq\ \delta+(e^{\kappa T}-1)\varepsilon+C\varepsilon^2+\int^t_T\kappa
e^{\kappa\tau}U_1(\tau)d\tau
+C\int^t_Te^{\kappa\tau}(U_1^2(\tau)+U_2^2(\tau))d\tau.
\end{eqnarray}

(ii) $\tau_1>T$. Then
\begin{eqnarray}\label{est_case2bii}
e^{\kappa t}|v_1(x,t)|&&\leq\ \delta+(e^{\kappa
T}-1)\varepsilon+C\varepsilon^2+\int^{\tau_1}_T\kappa
e^{\kappa\tau}V_1(\tau)d\tau+\int^t_{\tau_1}\kappa
e^{\kappa\tau}V_2(\tau)d\tau \nonumber\\
&&\ \ \ +C\int^t_Te^{\kappa\tau}\sum^2_{i=1}V_i^2(\tau)+W_i^2(\tau)d\tau \nonumber\\
&&\leq\ \delta+(e^{\kappa T}-1)\varepsilon+C\varepsilon^2+\int^t_T\kappa
e^{\kappa\tau}U_1(\tau)d\tau
+C\int^t_Te^{\kappa\tau}(U_1^2(\tau)+U_2^2(\tau))d\tau.
\end{eqnarray}

Hence we always have
\begin{equation}\label{est_final_1}
e^{\kappa t}|v_1(x,t)| \leq\ \delta+(e^{\kappa
T}-1)\varepsilon+C\varepsilon^2+\int^t_T\kappa
e^{\kappa\tau}U_1(\tau)d\tau
+C\int^t_Te^{\kappa\tau}(U_1^2(\tau)+U_2^2(\tau))d\tau.
\end{equation}

In the same way, we obtain similar estimates for $|v_2|, |w_1|$ and
$|w_2|$. Therefore we have
\begin{equation}\label{est_final}
\left\{\begin{array}{l}
e^{\kappa t}U_1(x,t) \leq\ \delta+(e^{\kappa
T}-1)\varepsilon+C\varepsilon^2+\int^t_T\kappa
e^{\kappa\tau}U_1(\tau)d\tau
+C\int^t_Te^{\kappa\tau}(U_1^2(\tau)+U_2^2(\tau))d\tau, \\\\
e^{\kappa t}U_2(x,t) \leq\ \delta+(e^{\kappa
T}-1)\varepsilon+C\varepsilon^2+\int^t_T\kappa
e^{\kappa\tau}U_2(\tau)d\tau
+C\int^t_Te^{\kappa\tau}(U_1^2(\tau)+U_2^2(\tau))d\tau.
\end{array}\right.
\end{equation}

Now let
\begin{equation}\label{sub}
\left\{\begin{array}{l}
e^{\kappa t}Y_1(x,t) =\ \delta+(e^{\kappa
T}-1)\varepsilon+C\varepsilon^2+\int^t_T\kappa
e^{\kappa\tau}Y_1(\tau)d\tau
+C\int^t_Te^{\kappa\tau}(Y_1^2(\tau)+Y_2^2(\tau))d\tau, \\\\
e^{\kappa t}Y_2(x,t) =\ \delta+(e^{\kappa
T}-1)\varepsilon+C\varepsilon^2+\int^t_T\kappa
e^{\kappa\tau}Y_2(\tau)d\tau
+C\int^t_Te^{\kappa\tau}(Y_1^2(\tau)+Y_2^2(\tau))d\tau.
\end{array}\right.
\end{equation}
Then obviously, $U_i\leq Y_i$ for $i=1,2$ and $Y_i$ satisfies
\begin{equation}\nonumber
\left\{\begin{array}{l}
\displaystyle \frac{d}{dt}Y_i(t)=C(Y_1^2+Y_2^2),\\\\
Y_i(T)=e^{-\kappa T}\Big[\delta+(e^{\kappa
T}-1)\varepsilon+C\varepsilon^2\Big].
\end{array}\right.
\end{equation}

Therefore $\forall\
t\in [T,T+T_1],$
\begin{equation}\label{estY1}
Y(t)\leq e^{-\kappa T}\Big[\delta+(e^{\kappa
T}-1)\varepsilon+C\varepsilon^2\Big]+C\int^t_TY(\tau)^2d\tau,
\end{equation}
where $Y(t)=(Y_1(t),Y_2(t))^T$. More precisely, we have $\forall\
t\in [T,T+T_1]$,
\begin{equation}\label{estY2}
Y(t)\leq \frac{e^{-\kappa T}\Big[\delta+(e^{\kappa
T}-1)\varepsilon+C\varepsilon^2\Big]}{1-Ce^{-\kappa
T}\Big[\delta+(e^{\kappa T}-1)\varepsilon+C\varepsilon^2\Big](t-T)}.
\end{equation}
Therefore for any given $T_0>0$, we can choose $0<\varepsilon_0<1/(2C)$
sufficiently small and independent of $T$. For any fixed $\varepsilon$
($0<\varepsilon\leq\varepsilon_0$), there exists
$\delta=\delta(\varepsilon)>0$ also independent of $T$, such that
\begin{equation}\label{estY3}
Y(t)\leq \frac{e^{-\kappa T}\delta+(1-e^{-\kappa
T}/2)\varepsilon}{1-C\Big[e^{-\kappa T}\delta+(1-e^{-\kappa
T}/2)\varepsilon\Big]T_1}\leq\varepsilon,\ \forall\ t\in [T,T+T_1].
\end{equation}
Therefore we may choose $\varepsilon_0$ so small that
$2C\varepsilon_0<1$ and $CT_1\varepsilon_0<1/(4e^{\kappa T_0}-1)$. Then
choose $\delta\leq\varepsilon/4$. In this way, we further have $T_0 \sim |\log\varepsilon|$. This proves the theorem.

\end{proof}

\section{Conclusion} \label{sec_Con}
We studied the free boundary problem of  swelling of a gel with inertia and elastic effects dominating the viscous ones, in one space dimension. 
 This corresponds to  early dynamics regime for  many polymeric gels
and also describes the main dynamics for polyssaccharide gel networks. We transform the free boundary problem of  the weakly dissipative hyperbolic
governing equation to one fixed domain. As a result  of requiring the system to admit one dimensional solutions, it turns out that  the polymer 
volume fraction at the 
interface with the surrounding fluid  is fully determined, if we neglect fluid inertia in the boundary conditions.

We find necessary and sufficient conditions for the system to be hyperbolic, this requiring a monotonicity property of a one-dimensional stress. 
In current work, we are studying the case that such condition fails, and found that it may be indicative of 
 two possible phenomena, one being {\it de-swelling}, that is the interface will move backwards, and the second one may be related to the gel locally
 experiencing  a volume phase transition. This is a phenomenon analogous to the volume transition in polyelectrolyte gels that occur when a relevant 
ion concentration reaches a critical value.  

\vspace{.3cm}

{\bf \noindent Acknowledgement.} M.C. Carme was supported in part by NSF grants DMS-0909165. R.M. Chen was supported in part by NSF grants DMS-0908663.



\end{document}